\def\thesection{\arabic{section}}
\def\theequation{\thesection.\arabic{equation}}
\def\R{\mathbb{R}}
\newcommand{\De} {\Delta}
\newcommand{\noi} {\noindent}
\newcommand{\mb} {\mathbb}
\markboth{\small } {\small Mixed local-nonlocal singular problem}
\def\theequation{\@arabic{\c@section}.\@arabic{\c@equation}}
\def\N{{I\!\!N}}
\newtheorem{Theorem}{Theorem}[section]
\newtheorem{Lemma}[Theorem]{Lemma}
\newtheorem{Remark}[Theorem]{Remark}
\newtheorem{Definition}[Theorem]{Definition}
\begin{document}

{\vspace{0.01in}}

\title{Existence Theory for a class of semilinear mixed local and nonlocal equations involving variable singularities and singular measures}

\author{Sanjit Biswas\footnote{Department of Mathematics and Statistics, Indian Institute of Technology Kanpur, Kanpur-208016, Uttar Pradesh, India, Email: sanjitbiswas410@gmail.com } \,\,and Prashanta Garain\footnote{Department of Mathematical Sciences, Indian Institute of Science Education and Research, Berhampur, 760010, Odisha, India, Email: pgarain92@gmail.com}}

\maketitle

\begin{abstract}\noindent
This article establishes the existence of weak solutions for a class of mixed local-nonlocal problems with pure and perturbed singular nonlinearities. A key novelty is the treatment of variable singular exponents alongside measure-valued data. Notably, both source terms may be measures, with the singular component modeled by both a singular and non-singular measure. Our main focus is on the singular measure data, which appears to be new, even for constant exponents.
\end{abstract}

\maketitle

\noi {Keywords: Mixed local-nonlocal singular problem, existence, variable exponent, measure data.}

\noi{\textit{2020 Mathematics Subject Classification: 35M10, 35M12, 35J75, 35R06, 35R11}

\bigskip

\tableofcontents
\section{Introduction}
In this article, we investigate the existence of weak solutions to a mixed local-nonlocal elliptic problem involving measure data and a variable singular exponent. The problem is formulated as follows:
\begin{align}\label{ME}
     \begin{cases}
        &\mathcal{M}u:=-\mathcal{A}u+\mathcal{B}u=\frac{\nu}{u^{\delta(x)}}+\mu \text{ in } \Omega,\\
        &u=0 \text{ in } \mathbb{R}^N\setminus \Omega \text{ and } u>0 \text{ in }\Omega,
    \end{cases}
\end{align}
where $\Omega\subset\mathbb{R}^N,\,N>2$ is a bounded domain with Lipschitz boundary. The local operator $\mathcal{A}$ is given by $\mathcal{A}u=\text{div}(A(x)\nabla u)$, where $A:\Omega\to\mathbb{R}^{N^2}$ is a bounded elliptic matrix satisfying
\begin{equation}\label{lkernel}
\alpha|\xi|^2\leq A(x)\xi\cdot\xi,\quad |A(x)|\leq\beta,
\end{equation}
for every $\xi\in\mathbb{R}^N$ and for almost every $x\in\Omega$, with constants $0<\alpha\leq\beta$. The nonlocal part $\mathcal{B}$ is defined as
$$
\mathcal{B}u=\text{P.V.}\int_{\mathbb{R}^N}(u(x)-u(y))K(x,y)\,dy,
$$
where P.V. denotes the principal value and $K(x,y)$ is a symmetric kernel satisfying
\begin{equation}\label{nkernel}
\frac{\Lambda^{-1}}{|x-y|^{N+2s}}\leq K(x,y)\leq\frac{\Lambda}{|x-y|^{N+2s}}
\end{equation}
for some constant $\Lambda\geq 1$ and $0<s<1$. A special case arises when $A(x)=I$ and $K(x,y)=|x-y|^{-N-2s}$, reducing $\mathcal{A}$ and $\mathcal{B}$ to the classical Laplacian $-\Delta$ and the fractional Laplacian $(-\Delta)^s$, respectively. In this setting, the operator $\mathcal{M}$ simplifies to the mixed local-nonlocal Laplace operator $-\Delta+(-\Delta)^s$, and equation \eqref{ME} becomes a generalization of the classical mixed singular problem:
\begin{align}\label{ME1}
     \begin{cases}
        &-\Delta u+(-\Delta)^s u=\frac{\nu}{u^\delta(x)}+\mu \text{ in } \Omega,\\
        &u=0 \text{ in } \mathbb{R}^N\setminus \Omega \text{ and } u>0 \text{ in }\Omega.
    \end{cases}
\end{align}
In this work, we consider $\mu$ and $\nu$ to be non-negative bounded Radon measures on a domain $\Omega$. Additionally, we assume that the function $\delta : \overline{\Omega} \to (0, \infty)$ is continuous. The precise assumptions are detailed in Theorems \ref{Theorem1} and \ref{Theorem2}. The strict positivity of $\delta(x)$ induces a singular behavior in the nonlinearity of \eqref{ME} near zero, a hallmark of singular elliptic problems. Consequently, problem \eqref{ME} captures a broad class of mixed singular equations involving both constant and variable singular exponents, as well as measure data.

The purely local problem
\begin{align}\label{MEloc1}
     \begin{cases}
        &-\Delta u=\frac{\nu}{u^{\delta(x)}}+\mu \text{ in } \Omega,\\
        &u=0 \text{ on } \partial\Omega \text{ and } u>0 \text{ in }\Omega,
    \end{cases}
\end{align}
and the purely nonlocal problem
\begin{align}\label{MEnonloc1}
     \begin{cases}
        &(-\Delta)^s u=\frac{\nu}{u^{\delta(x)}}+\mu \text{ in } \Omega,\\
        &u=0 \text{ in } \mathbb{R}^N\setminus\Omega \text{ and } u>0 \text{ in }\Omega
    \end{cases}
\end{align}
have been extensively studied in the literature, particularly with regard to existence results for both constant and variable singular exponents $\delta$, under various assumptions on the given data $\nu$ and $\mu$.

Specifically, when $\delta>0$ is a constant, the analysis of equations \eqref{MEloc1} and \eqref{MEnonloc1} can be found in \cite{Arcoyadie, Arcoyana, BocOrs, CRT} and \cite{Sciunzi, Fang}, respectively. In the case where $\delta$ is variable, the relevant studies for the local and nonlocal problems are presented in \cite{Alvesjde, BGM, CMvar, Garainmm} and \cite{GMcpaa, PKvar}, respectively.

It is important to note that in all the aforementioned works, $\nu$ and $\mu$ are assumed to be integrable functions belonging to suitable Lebesgue spaces. The case involving general Radon measures has been addressed for the local problem in \cite{POesaim, OPdie}, and for the nonlocal setting in \cite{Giri}.

In recent years, mixed local-nonlocal singular problems have attracted significant attention in the analysis of partial differential equations. A notable focus has been on the purely singular mixed local-nonlocal problem \eqref{ME1}, particularly in the case where $\nu$ is a non-negative function belonging to an appropriate Lebesgue space and $\mu=0$. For a constant singular exponent $\delta>0$, the existence and regularity of solutions to problem \eqref{ME1} have been investigated in \cite{Arora, Gjgea, Hichem}, while its quasilinear extension has been studied in \cite{PGjms, Guna}. When the singular exponent $\delta$ is allowed to vary spatially, further results concerning existence and regularity have been obtained in \cite{Biroud, biswas2025, GKK} and the references therein.

Moreover, in the presence of both $\nu$ and $\mu$ as non-negative functions in suitable Lebesgue spaces, the perturbed mixed local-nonlocal problem \eqref{ME1} has also been addressed in \cite{Vecchicvpde, Gjgea} and related works, under the assumption that $0<\delta<1$ is a constant. More recently, the case $\delta\geq 1$ has been considered in \cite{BalDas}, where the authors examined both the semilinear and quasilinear formulations of \eqref{ME1}. Moreover, the case of any $\delta>0$ is studied in \cite{GG}.

Most recently, equation \eqref{ME1} has been investigated in \cite{Ghosh} under the assumptions that $\delta>0$ is a constant, $\nu$ is a positive integrable function, and $\mu$ is a non-negative bounded Radon measure on $\Omega$. Additionally, in \cite{BG}, equation \eqref{ME} is studied in the setting where both $\nu$ and $\mu$ are non-negative bounded Radon measures on $\Omega$ and $\delta$ is a variable function. It is worth noting that in \cite{BG}, the measure $\nu$ is assumed to be non-singular. It is worth emphasizing that, in contrast to \cite{BG}, the present work primarily focuses on the setting where the measure $\nu$ is purely singular. However, our results also encompass certain cases where $\nu$ is non-singular, though subject to more restrictive assumptions than those in \cite{BG}. 

In the non-singular case, the approximate solutions admit a uniform lower bound on every compact subset of $\Omega$; see \cite[Lemma A.1, pages 20–21]{BG}. This result relies on the absolutely continuous component of the measure $\nu$. In contrast, the approximation scheme utilized in \cite{BG} fails to provide such a bound in the presence of singular measures.

To address the singular case, we adopt an alternative approximation strategy inspired by \cite{OPdie}, which is based on a monotone approximation of the measure $\nu$, see Lemma \ref{App2}. Establishing the existence of solutions to the resulting approximate problems presents additional difficulties, particularly requiring that $|\nabla\delta|\in L^N(\Omega)$. Once existence is established, we perform a limiting procedure, which relies on several non-trivial a priori estimates. These estimates are obtained through the careful selection of test functions tailored to the approximate problem. However, the same approach works to deal with non-singular case as well, see Theorems \ref{Theorem1} and \ref{Theorem2}.

The development of this approximation framework and the corresponding analytical techniques represent the main contributions of our approach.

The structure of the paper is as follows: Section 2 introduces the functional framework and presents the main results. Section 3 is devoted to the proofs of these results. Finally, in the Appendix (Section 4), we analyze the approximate problem. 

\textbf{Notations:} For the rest of the paper, unless otherwise mentioned, we will use the following notations and assumptions:
\begin{itemize}
    \item For $k,s\in\R$, we define $T_k(s)=\max\{ -k,\, \min\{s,k\}\}$ and $G_k(s)=(|s|-k)^+sgn(s).$
    \item For a measurable set $A\subset\mathbb{R}^N$, $|A|$ denotes the Lebesgue measure of $A$. Moreover, for a function $u:A\to\R$, we define $u^+:=\max\{ u, 0\}$ and $u^-:=\max\{-u, 0\}.$ 

    \item For $\sigma>1$, we define the conjugate exponent of $\sigma$ by $\sigma'=\frac{\sigma}{\sigma-1}$.
  
    \item $C$ denotes a positive constant, whose value may change from line to line or even in the same line.
    
    \item For a measurable function $f$ over a measurable set $S$ and given constants $c,d$, we write $c\leq u\leq d$ in $S$ to mean that $c\leq u\leq d$ a.e. in $S$.
    
   \item $\Omega\subset\mathbb{R}^N$ with $N>2$ be a bounded Lipschitz domain.

   \item For open sets $\omega$ and $\Omega$ of $\mathbb{R}^N$, by the notation $\omega\Subset\Omega$, we mean that $\overline{\omega}$ is a compact subset of $\Omega$.
\end{itemize}

\section{Functional setting and main results}
\subsection{Functional setting}
We begin this section by introducing Marcinkiewicz spaces. For $q\geq 1$, we define Marcinkiewicz space $M^q(\Omega)$ as the set of all measurable functions $u:\Omega\to\R$ such that there exists a constant $C>0$ with the estimate
$$|\{x\in\Omega: |u(x)|>t\}|\leq \frac{C}{t^q}, \text{ for all } t>0.$$
Note that for a bounded domain $\Omega$, it is enough to have this inequality for all $t\geq t_0$ for some $t_0>0.$ The following embeddings are continuous
\begin{align}\label{maremb}
    L^q(\Omega)\hookrightarrow M^q(\Omega)\hookrightarrow L^{q-\eta}(\Omega),
\end{align}
for any {$\eta\in(0,q-1].$} For more details, see \cite{JMar} and the references therein.

The Sobolev space $W^{1,p}(\Omega)$ for $1<p<\infty$, is defined to be the space of functions $u:\Omega\to\mathbb{R}$ in $L^p(\Omega)$ such that the partial derivatives $\frac{\partial u}{\partial x_i}$ for $1\leq i\leq N$ exists in the weak sense and belong to $L^p(\Omega)$. The space $W^{1,p}(\Omega)$ is a Banach space (see \cite{LC}) equipped with the norm:
$$
\|u\|_{W^{1,p}(\Omega)} = \|u\|_{L^p(\Omega)} + \|\nabla u\|_{L^p(\Omega)},
$$
where $\nabla u=\Big(\frac{\partial u}{\partial x_1},\ldots,\frac{\partial u}{\partial x_N}\Big)$.  
The fractional Sobolev space $W^{s,p}(\Omega)$ for $0<s<1<p<\infty$, is defined by
$$
W^{s,p}(\Omega)=\Bigg\{{u:\Omega\to\mathbb{R}:\,}u\in L^p(\Omega),\,\frac{|u(x)-u(y)|}{|x-y|^{\frac{N}{p}+s}}\in L^p(\Omega\times \Omega)\Bigg\}
$$
under the norm
$$
\|u\|_{W^{s,p}(\Omega)}=\left(\int_{\Omega}|u(x)|^p\,dx+\int_{\Omega}\int_{\Omega}\frac{|u(x)-u(y)|^p}{|x-y|^{N+ps}}\,dx\,dy\right)^\frac{1}{p}.
$$
We refer to \cite{Hitchhikersguide} and the references therein for more details on fractional Sobolev spaces. Due to the mixed behavior of our equations, following \cite{Vecchihong, VecchiBO, Vecchihenon}, we consider the space
$$
W_0^{1,p}(\Omega)=\{u\in W^{1,p}(\mathbb{R}^N):u=0\text{ in }\mathbb{R}^N\setminus\Omega\}
$$
under the norm
$$
\|u\|_{W_0^{1,p}(\Omega)}=\left(\int_{\Omega}|\nabla u|^p\,dx+\int_{\mathbb{R}^{N}}\int_{\mathbb{R}^{N}}\frac{|u(x)-u(y)|^p}{|x-y|^{N+ps}}\, dx dy\right)^\frac{1}{p}.
$$
{Using Lemma \ref{locnon1} below, we observe that the norm $\|u\|_{W_0^{1,p}(\Omega)}$ defined above is equivalent to the norm $\|u\|=\|\nabla u\|_{L^p(\Omega)}$.} Let $0<s\leq 1<p<\infty$. Then we say that $u\in W^{s,p}_{\mathrm{loc}}(\Omega)$ if $u\in W^{s,p}(K)$ for every $K\Subset\Omega$.\\
The following result can be found in \cite[Lemma $2.1$]{Silva}.
\begin{Lemma}\label{locnon1}
Let $0<s<1<p<\infty$. There exists a constant $C=C(N,p,s,\Omega)>0$ such that
\begin{equation}\label{locnonsem}
\int_{\mathbb{R}^N}\int_{\mathbb{R}^N}\frac{|u(x)-u(y)|^p}{|x-y|^{N+ps}}\,dx\,dy\leq C\int_{\Omega}|\nabla u|^p\,dx
\end{equation}
for every $u\in W_0^{1,p}(\Omega)$.
\end{Lemma}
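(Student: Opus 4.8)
The plan is to prove \eqref{locnonsem} by the standard decomposition of the Gagliardo double integral into a near-diagonal part and a far-field part, controlling the first by an elementary integral representation of Sobolev functions and the second by the fact that $u$ vanishes outside the bounded set $\Omega$ together with the Poincar\'e inequality. Throughout, fix $u\in W_0^{1,p}(\Omega)$, so that $u\in W^{1,p}(\mathbb{R}^N)$ with $u\equiv 0$ a.e.\ on $\mathbb{R}^N\setminus\Omega$; since a Sobolev function has vanishing gradient a.e.\ on a level set, $\nabla u=0$ a.e.\ on $\mathbb{R}^N\setminus\Omega$, hence $\|\nabla u\|_{L^p(\mathbb{R}^N)}=\|\nabla u\|_{L^p(\Omega)}$. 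Write
$$\int_{\mathbb{R}^N}\int_{\mathbb{R}^N}\frac{|u(x)-u(y)|^p}{|x-y|^{N+ps}}\,dx\,dy=I_1+I_2,$$
where $I_1$ is the integral over $|x-y|<1$ and $I_2$ that over $|x-y|\ge 1$ (the cutoff radius $1$ is arbitrary).

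For $I_1$, the change of variables $z=x-y$ and Tonelli's theorem give $I_1=\int_{|z|<1}|z|^{-N-ps}\big(\int_{\mathbb{R}^N}|u(y+z)-u(y)|^p\,dy\big)\,dz$. Using the representation $u(y+z)-u(y)=\int_0^1 z\cdot\nabla u(y+tz)\,dt$, valid for a.e.\ $y$ and each fixed $z$ (by absolute continuity on lines of Sobolev functions, or by approximation with smooth functions, all quantities below being continuous in the $W^{1,p}$ norm), together with Jensen's inequality, one gets $|u(y+z)-u(y)|^p\le|z|^p\int_0^1|\nabla u(y+tz)|^p\,dt$; integrating in $y$ and using translation invariance of Lebesgue measure yields $\int_{\mathbb{R}^N}|u(y+z)-u(y)|^p\,dy\le|z|^p\|\nabla u\|_{L^p(\mathbb{R}^N)}^p=|z|^p\|\nabla u\|_{L^p(\Omega)}^p$. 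Therefore
$$I_1\le\|\nabla u\|_{L^p(\Omega)}^p\int_{|z|<1}\frac{dz}{|z|^{N-p(1-s)}}=C(N,p,s)\,\|\nabla u\|_{L^p(\Omega)}^p,$$
the integral being finite because $p(1-s)>0$.

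For $I_2$, the elementary inequality $|u(x)-u(y)|^p\le 2^{p-1}(|u(x)|^p+|u(y)|^p)$, Fubini's theorem and the symmetry $x\leftrightarrow y$ give
$$I_2\le 2^p\int_{\mathbb{R}^N}|u(x)|^p\Big(\int_{|x-y|\ge 1}\frac{dy}{|x-y|^{N+ps}}\Big)\,dx=2^p\Big(\int_{|z|\ge 1}\frac{dz}{|z|^{N+ps}}\Big)\|u\|_{L^p(\Omega)}^p,$$
where the $z$-integral converges since $ps>0$ and where we used $u=0$ outside $\Omega$. Invoking the Poincar\'e inequality for $W_0^{1,p}(\Omega)$ (legitimate precisely because $\Omega$ is bounded and $u$ vanishes on its complement), $\|u\|_{L^p(\Omega)}^p\le C(N,p,\Omega)\|\nabla u\|_{L^p(\Omega)}^p$, so $I_2\le C(N,p,s,\Omega)\|\nabla u\|_{L^p(\Omega)}^p$. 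Adding the two bounds gives \eqref{locnonsem}. The step requiring genuine care --- the ``main obstacle'' --- is the far-field term $I_2$: it is finite only because $u$ is supported in the bounded set $\Omega$ (a generic $W^{1,p}(\mathbb{R}^N)$ function need not have finite Gagliardo seminorm), and this is exactly why the constant $C$ must depend on $\Omega$, through the Poincar\'e inequality; by contrast, the near-diagonal estimate for $I_1$ is universal. A secondary technical point is to justify the integral representation of $u$ via the density/ACL argument indicated above.
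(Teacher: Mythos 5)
Your proof is correct. The paper does not supply its own argument for this lemma: it simply cites \cite[Lemma 2.1]{Silva} (Buccheri, da Silva, de Miranda), so there is no in-text proof to compare against line by line. What you have written is the classical argument: decompose the Gagliardo double integral at a fixed cutoff radius into a near-diagonal piece $I_1$ and a far piece $I_2$; control $I_1$ by the translate estimate $\|u(\cdot+z)-u\|_{L^p(\mathbb{R}^N)}\leq|z|\,\|\nabla u\|_{L^p(\mathbb{R}^N)}$ (via the ACL/density characterization of $W^{1,p}$) combined with the integrability of $|z|^{-N+p(1-s)}$ near $0$; and control $I_2$ by the convexity inequality, the integrability of $|z|^{-N-ps}$ at infinity, and Poincar\'e. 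Each step is justified, the use of $\nabla u=0$ a.e.\ on $\mathbb{R}^N\setminus\Omega$ and $\supp u\subset\overline\Omega$ is handled cleanly, and the constants depend only on $N,p,s,\Omega$ as claimed. This is the same standard route found in the literature (e.g.\ \cite[Prop.\ 2.2]{Hitchhikersguide} for the bounded-domain variant), and there is no reason to think the cited reference argues materially differently.

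One cosmetic remark: for the translate estimate you use Jensen pointwise and then integrate in $y$; it is marginally tidier to invoke Minkowski's integral inequality directly on $\big\|\int_0^1 z\cdot\nabla u(\cdot+tz)\,dt\big\|_{L^p}$, which gives the same bound with no pointwise representation needed beyond density of $C_c^\infty(\mathbb{R}^N)$ in $W^{1,p}(\mathbb{R}^N)$. Either way the step is sound.
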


For the subsequent Sobolev embedding, refer to \cite{LC}, for instance.
\begin{Lemma}\label{emb}
Let $1<p<\infty$. Then the embedding operators
\[
W_0^{1,p}(\Omega)\hookrightarrow
\begin{cases}
L^t(\Omega),&\text{ for }t\in[1,p^{*}],\text{ if }1<p<N,\\
L^t(\Omega),&\text{ for }t\in[1,\infty),\text{ if }p=N,\\
L^\infty(\Omega),&\text{ if }p>N
\end{cases}
\]
are continuous. Moreover, they are compact except for $t=p^*$ if $1<p<N$. Here $p^*=\frac{Np}{N-p}$ if $1<p<N$.
\end{Lemma}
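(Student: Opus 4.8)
The plan is to deduce the statement from the classical Sobolev, Morrey and Rellich--Kondrachov theorems on $\mathbb{R}^N$, using that every $u\in W_0^{1,p}(\Omega)$, extended by $0$ outside $\Omega$, belongs to $W^{1,p}(\mathbb{R}^N)$ and is supported in the fixed compact set $\overline{\Omega}$. First I would record the norm equivalence: by the very definition of $W_0^{1,p}(\Omega)$ the zero extension of $u$ (still denoted $u$) lies in $W^{1,p}(\mathbb{R}^N)$, and Lemma~\ref{locnon1} gives
\[
\|u\|_{W_0^{1,p}(\Omega)}^p=\int_\Omega|\nabla u|^p\,dx+\int_{\mathbb{R}^N}\int_{\mathbb{R}^N}\frac{|u(x)-u(y)|^p}{|x-y|^{N+ps}}\,dx\,dy\le (1+C)\int_\Omega|\nabla u|^p\,dx,
\]
while trivially $\int_\Omega|\nabla u|^p\,dx\le\|u\|_{W_0^{1,p}(\Omega)}^p$; hence $\|u\|_{W_0^{1,p}(\Omega)}$ and $\|\nabla u\|_{L^p(\Omega)}$ are equivalent, and it suffices to estimate the relevant $L^t$ or $L^\infty$ norm of $u$ by $\|\nabla u\|_{L^p(\Omega)}$.

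For continuity I would split into the three cases. If $1<p<N$, the Gagliardo--Nirenberg--Sobolev inequality on $\mathbb{R}^N$ gives $\|u\|_{L^{p^*}(\mathbb{R}^N)}\le C\|\nabla u\|_{L^p(\mathbb{R}^N)}=C\|\nabla u\|_{L^p(\Omega)}$, and since $|\Omega|<\infty$ Hölder's inequality yields $\|u\|_{L^t(\Omega)}\le|\Omega|^{\frac1t-\frac1{p^*}}\|u\|_{L^{p^*}(\Omega)}$ for every $t\in[1,p^*]$. If $p=N$, then for any $q\in(1,N)$ one has $\|\nabla u\|_{L^q(\Omega)}\le|\Omega|^{\frac1q-\frac1N}\|\nabla u\|_{L^N(\Omega)}$, so the previous case gives $u\in L^{q^*}(\Omega)$ with a corresponding bound; letting $q\uparrow N$ makes $q^*=\frac{Nq}{N-q}\to\infty$, which proves $u\in L^t(\Omega)$ for all $t\in[1,\infty)$. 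If $p>N$, Morrey's inequality on $\mathbb{R}^N$, together with the compact support of $u$, gives $u\in C^{0,1-N/p}(\mathbb{R}^N)$ and $\|u\|_{L^\infty(\Omega)}\le C\|\nabla u\|_{L^p(\Omega)}$.

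For compactness, let $(u_k)$ be bounded in $W_0^{1,p}(\Omega)$; by the norm equivalence the zero extensions are bounded in $W^{1,p}(\mathbb{R}^N)$ with supports in $\overline{\Omega}$. If $1<p<N$, Rellich--Kondrachov (valid since $\Omega$ is bounded Lipschitz) yields a subsequence converging strongly in $L^p(\Omega)$; being also bounded in $L^{p^*}(\Omega)$, the interpolation inequality $\|v\|_{L^t(\Omega)}\le\|v\|_{L^p(\Omega)}^{1-\theta}\|v\|_{L^{p^*}(\Omega)}^{\theta}$ with $\frac1t=\frac{1-\theta}{p}+\frac{\theta}{p^*}$, $\theta\in[0,1)$, upgrades this to strong convergence in $L^t(\Omega)$ for $t\in[p,p^*)$, and $t<p$ follows from $L^p(\Omega)\hookrightarrow L^t(\Omega)$; the cases $p=N$ (choose $q<N$ with $q^*>t$ and argue as above) and $p>N$ (Arzel\`a--Ascoli via the uniform Hölder bound) are analogous. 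Finally, the embedding into $L^{p^*}(\Omega)$ for $1<p<N$ is not compact: fixing a nontrivial $\phi\in C_c^\infty(\Omega)$ and $x_0\in\Omega$, the functions $\phi_k(x)=k^{(N-p)/p}\phi(k(x-x_0))$ (supported in $\Omega$ for $k$ large) are bounded in $W_0^{1,p}(\Omega)$, tend to $0$ a.e., yet satisfy $\|\phi_k\|_{L^{p^*}(\Omega)}=\|\phi\|_{L^{p^*}(\mathbb{R}^N)}$, so no subsequence converges in $L^{p^*}(\Omega)$. Since all ingredients are standard, the only point requiring attention is the passage from $\mathbb{R}^N$ to $\Omega$, which is handled uniformly by the zero-extension and by $|\Omega|<\infty$; there is thus no genuine obstacle, and the result is classical (see \cite{LC}).
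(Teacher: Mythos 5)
Your proof is correct and complete; the paper itself offers no proof of this lemma, simply citing Evans' book, and your argument (zero extension plus the norm equivalence from Lemma~\ref{locnon1}, then Gagliardo--Nirenberg--Sobolev, Morrey, Rellich--Kondrachov with interpolation, and the standard rescaling counterexample at $t=p^{*}$) is exactly the classical route that citation points to. No gaps.
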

We now briefly discuss some preliminary results related to measures (see \cite{GOB, MD, OPdie}) and $p$-capacity (see \cite{Heinonen}). {We define $M(\Omega)$ as a set of all signed Radon measures on $\Omega$ with bounded total variation (as usual, identified with a linear map $u\to \int_\Omega u\, d\mu$ on $C(\overline{\Omega})$). If $\nu\in M(\Omega)$ is a non-negative Radon measure then by the Lebesgue's decomposition theorem \cite[page 384]{Royden} and the Radon-Nikodym theorem \cite[page 382]{Royden}, we have $$\nu=\nu_a+\nu_s,$$  where $\nu_a$ is a Lebesgue measurable function and $\nu_s$ is singular with respect to the Lebesgue measure, i.e., $\nu_s$ is concentrated on a set of zero Lebesgue measure. Moreover, if $\nu$ is bounded then $\nu_a\in L^1(\Omega)$. If the function $\nu_a$ is not identically zero, then we say that $\nu$ is {non-singular} with respect to the Lebesgue measure, otherwise it is called singular measure.

Suppose $p>1$, then for a compact set $K\subset\Omega$, the $p$-capacity of $K$ is denoted by $\mathrm{cap_{\text{$p$}}(K)}$ and defined as
$$\mathrm{cap\text{$_p$}(K)}:=\inf \left \{ \int_\Omega|\nabla\phi|^p dx : \phi\in C^\infty_0(\Omega), \phi\geq \chi_K \right \},$$
where $$\chi_K(x):=\begin{cases}
    1\text{ if } x\in K,\\
    0 \text{ otherwise.}
\end{cases}$$
Finally, $p$-capacity of any subset $B$ of $\Omega$ is defined by the standard way.} We say a measure $\nu\in M(\Omega)$ is absolutely continuous with respect to $p$-capacity if the following holds: \textit{$\nu(E)=0$ for every $E\subset\Omega$ such that $\mathrm{cap_\text{$p$}(E)}=0$.}
We define 
$$
M^p_0(\Omega)=\{\nu\in M(\Omega):\nu \text{ is absolutely continuous with respect to } p \text{-capacity}\}.
$$
One can observe that if $1<p_1<p_2$, then $M^{p_1}_0(\Omega)\subset M^{p_2}_0(\Omega).$

The following characterization from \cite[Theorem 2.1]{GOB} is very useful for us.
\begin{Theorem}\label{dec1}
    Let $1<p<\infty$ and $\nu\in M^p_0(\Omega)$. Then there exists $f\in L^1(\Omega)$ and $G\in (L^{p'}(\Omega))^N$ such that $\nu=f-\mathrm{div}(G)\in L^1(\Omega)+W^{-1,p'}(\Omega)$ {in $\mathcal{D}'(\Omega)$ (space of distributions).} Furthermore, if $\nu$ is non-negative, then $f$ is non-negative.
\end{Theorem}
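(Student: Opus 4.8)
\emph{Proof idea.}
One inclusion is elementary and serves only as orientation: a function in $L^1(\Omega)$ does not charge Lebesgue-null, hence $p$-capacity-null, sets; and if $-\mathrm{div}(G)$ with $G\in(L^{p'}(\Omega))^N$ is a bounded measure $\sigma$, then for compact $E$ with $\mathrm{cap}_p(E)=0$ one tests with $\varphi_j\in C_0^\infty(\Omega)$, $0\le\varphi_j\le1$, $\varphi_j\equiv1$ near $E$, $\|\nabla\varphi_j\|_{L^p(\Omega)}\to0$, and lets $j\to\infty$ in $|\sigma|(E)\le\int_\Omega\varphi_j\,d|\sigma|=\int_\Omega G\cdot\nabla\varphi_j\,dx\to0$. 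Hence the substance of the theorem is the reverse inclusion $M^p_0(\Omega)\subseteq L^1(\Omega)+W^{-1,p'}(\Omega)$, and that is what I would prove. By the Hahn--Jordan decomposition $\nu=\nu^+-\nu^-$, and $\nu^\pm\le|\nu|$ are non-negative bounded measures that still vanish on $p$-capacity-null sets; decomposing each suffices, so I may assume $\nu\ge0$, and the concluding sign statement will follow once the $L^1$-part produced for such a $\nu$ turns out to be non-negative.

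The crucial consequence of $\nu\in M^p_0(\Omega)$ I would exploit is the \emph{uniform} absolute continuity of $\nu$ with respect to $p$-capacity: for every $\e>0$ there is $\eta>0$ such that $\nu(E)<\e$ whenever $\mathrm{cap}_p(E)<\eta$ (deduced from ``$\mathrm{cap}_p(E)=0\Rightarrow\nu(E)=0$'' together with boundedness of $\nu$, by contradiction along a decreasing family of sets). Next I would approximate $\nu$ by smoother non-negative measures $\nu_n$ --- a mollification of $\nu$, or an increasing sequence of $W^{-1,p'}(\Omega)$-measures obtained by a capacitary truncation --- arranged so that $\nu_n\to\nu$ in total variation and the family $\{\nu_n\}$ is still equi--absolutely continuous with respect to $p$-capacity (automatic if one takes $\nu_n\le\nu$). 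For each $n$, let $w_n\in W_0^{1,p}(\Omega)$, $w_n\ge0$, solve the auxiliary problem $-\mathrm{div}(|\nabla w_n|^{p-2}\nabla w_n)=\nu_n$. Testing with $T_k(w_n)$ gives the basic a priori bound
\begin{equation*}
\int_\Omega|\nabla T_k(w_n)|^p\,dx=\int_\Omega\widetilde{T_k(w_n)}\,d\nu_n\le k\,\|\nu\|_{M(\Omega)}\qquad\text{for all }k>0\text{ and all }n,
\end{equation*}
whence, using $k^{-1}T_k(w_n)$ as a competitor, $\mathrm{cap}_p(\{w_n>k\})\le k^{-p}\int_\Omega|\nabla T_k(w_n)|^p\,dx\le k^{1-p}\|\nu\|_{M(\Omega)}\to0$ as $k\to\infty$; combined with the uniform absolute continuity this forces $\sup_n\nu_n(\{w_n>k\})\to0$ as $k\to\infty$. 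Standard measure-data compactness then yields, along a subsequence, $w_n\to w$ and $\nabla w_n\to\nabla w$ a.e., with $T_k(w_n)\rightharpoonup T_k(w)$ in $W_0^{1,p}(\Omega)$ for every $k$.

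To extract the decomposition, I would split, for a truncation level $k$,
\begin{equation*}
\nu_n=-\mathrm{div}\big(|\nabla T_k(w_n)|^{p-2}\nabla T_k(w_n)\big)-\mathrm{div}\big(\chi_{\{w_n>k\}}\,|\nabla w_n|^{p-2}\nabla w_n\big)=:\sigma_n^{(k)}+r_n^{(k)}.
\end{equation*}
For fixed $k$ the flux of $\sigma_n^{(k)}$ is bounded in $(L^{p'}(\Omega))^N$ (by $(k\,\|\nu\|_{M(\Omega)})^{1/p'}$), hence converges weakly in $(L^{p'}(\Omega))^N$ as $n\to\infty$, while $\|r_n^{(k)}\|_{M(\Omega)}$ is small, uniformly in $n$, once $k$ is large, by the previous paragraph. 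Passing to the limit in $n$ and then organizing the truncation levels $k_j\uparrow\infty$ (so that the successive corrections are summable) and telescoping, one represents $\nu=-\mathrm{div}(G)+\lim_m\sum_{j\le m}\tau_j$ in $\mathcal D'(\Omega)$, where $G\in(L^{p'}(\Omega))^N$ is the accumulated weak limit of the truncated fluxes and $\sum_j\|\tau_j\|_{M(\Omega)}<\infty$. The partial sums $\sum_{j\le m}\tau_j$ are bounded in $M(\Omega)$ and, crucially, equi-integrable, so by the Dunford--Pettis theorem they converge in $L^1(\Omega)$ to a function $f$; since $w_n\ge0$ and $\nu_n\ge0$ keep every building block compatible with the sign of $\nu$, one gets $f\ge0$ a.e. This gives $\nu=f-\mathrm{div}(G)$ with $f\in L^1(\Omega)$, $f\ge0$ and $G\in(L^{p'}(\Omega))^N$.

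The step I expect to be the genuine obstacle is the last one: proving that the accumulated correction measures converge to an honest $L^1$ function and not merely to a bounded measure --- that is, their equi-integrability, i.e. absence of concentration. This is exactly where the hypothesis $\nu\in M^p_0(\Omega)$ is indispensable (dropping it leaves one only in $M(\Omega)$, not in $L^1(\Omega)+W^{-1,p'}(\Omega)$), and it requires carefully coupling the capacitary bounds for the super-level sets $\{w_n>k\}$ with the uniform absolute continuity of $\nu$ with respect to $p$-capacity; the bookkeeping of the truncation levels in the telescoping series is the technically delicate part of the argument.
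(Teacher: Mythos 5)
First, a point of reference: the paper does not prove this statement at all. Theorem \ref{dec1} is quoted verbatim from Boccardo--Gallou\"et--Orsina \cite[Theorem 2.1]{GOB}, so there is no in-paper argument to compare against, and your attempt must be judged on its own. Your architecture is in fact faithful to the spirit of the original proof: reduce to $\nu\ge0$, invoke the increasing approximation $\nu_n\in W^{-1,p'}(\Omega)\cap M(\Omega)$ of Baras--Pierre/Dal Maso type (this is exactly the paper's Lemma \ref{App2}, itself a nontrivial cited result, so you should flag it as such rather than as a routine ``capacitary truncation''), solve auxiliary problems, and split the flux at a truncation level. Your preliminary estimates are correct: the bound $\int_\Omega|\nabla T_k(w_n)|^p\,dx\le k\|\nu\|_{M(\Omega)}$, the capacity decay $\mathrm{cap}_p(\{w_n>k\})\le k^{1-p}\|\nu\|_{M(\Omega)}$, and the $\varepsilon$--$\eta$ uniform absolute continuity with respect to $\mathrm{cap}_p$ (which does go through by the $\limsup$-set argument, since only countable subadditivity and monotonicity of the capacity are used).

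The proof is nonetheless incomplete, and the two missing steps are precisely the substance of the theorem. (i) You assert that $r_n^{(k)}=-\mathrm{div}\bigl(\chi_{\{w_n>k\}}|\nabla w_n|^{p-2}\nabla w_n\bigr)$ is a measure of small total variation, uniformly in $n$, for $k$ large, ``by the previous paragraph''. But that paragraph only controls $\nu_n(\{w_n>k\})$. To bound $|r_n^{(k)}|(\Omega)=|\nu_n-\sigma_n^{(k)}|(\Omega)$ one must know how the non-negative measure $\sigma_n^{(k)}=-\mathrm{div}\bigl(|\nabla T_k(w_n)|^{p-2}\nabla T_k(w_n)\bigr)$ compares with $\nu_n$ on and near $\{w_n\ge k\}$; that it coincides with $\nu_n$ on $\{w_n<k\}$ and carries mass at most of order $\nu_n(\{w_n\ge k\})$ elsewhere is a nontrivial lemma of the type proved in \cite{MD}, not a consequence of anything you established. (ii) More seriously, the terminal step cannot work as stated. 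The corrections $\tau_j$ arise as weak-$*$ limits of measures; the bound $\sum_j\|\tau_j\|_{M(\Omega)}<\infty$ only gives convergence of the partial sums in $M(\Omega)$, and the Dunford--Pettis theorem applies to equi-integrable sequences of \emph{functions} --- it cannot upgrade a bounded sequence of measures to an $L^1$ limit. Nothing in your construction excludes a singular part in $\lim_m\sum_{j\le m}\tau_j$, and the hypothesis $\nu\in M^p_0(\Omega)$ cannot supply it, since singular measures do belong to $M^p_0(\Omega)$ (that is the very case this paper studies). In \cite{GOB} the $L^1$ part is instead produced explicitly, increment by increment: each $\nu_{n+1}-\nu_n$, a non-negative bounded measure lying in $W^{-1,p'}(\Omega)$, is decomposed into a non-negative integrable function plus a term of arbitrarily small $W^{-1,p'}(\Omega)$ norm, and the two resulting series are summed separately in $L^1(\Omega)$ and $W^{-1,p'}(\Omega)$. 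You have correctly located the difficulty, but the argument as proposed does not close it.
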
}
Next, we define the notion of weak solutions of the problem \eqref{ME}, which is well stated as observed in \cite[Remark 2.8]{BG}.
\begin{Definition}\label{def1}
    Let $0<s<1<q<\infty$ and $\delta:\overline{\Omega}\to(0,\infty)$ be a continuous function. {Suppose that $\mu$ and $\nu$ are two non-negative bounded Radon measures on $\Omega$ such that $\nu\in M^q_0(\Omega)$. We say that $u\in W^{1,q}_{loc}(\Omega)\cap L^1(\Omega)$ is a weak solution of the equation (\ref{ME}) if $u=0$ in $\mathbb{R}^N\setminus\Omega$ and 
    \begin{enumerate}
        \item[(a)] for every $\omega\Subset\Omega$, there exists a constant $C(\omega)>0$ such that $u\geq C(\omega)>0$ in $\omega$ and
        
        \item[(b)] for every $\phi\in C^{\infty}_c(\Omega)$, we have
    \begin{align}\label{SC}
          \int_\Omega A(x)\nabla u\cdot\nabla\phi\, dx+\int_{\mathbb{R}^N}\int_{\mathbb{R}^N}&{(u(x)-u(y))(\phi(x)-\phi(y))}K(x,y)\, dx dy\nonumber\\
          &=\int_\Omega\frac{\phi}{u^{\delta(x)}}\, d\nu+\int_\Omega \phi\,  d\mu.
    \end{align}
     \end{enumerate}
    }
\end{Definition}

The following result can be found in \cite{meapp, DAL}.

\begin{Lemma}\label{App2}
    Suppose $1<p<\infty$ and $\nu\in M^p_0(\Omega)$ is a non-negative measure.  Then, there exists a non-negative increasing sequence of measures $\{\nu_n\}_{n\in\mb N}\subset W^{-1,p'}(\Omega)$ such that $\nu_n\to\nu$ in $\mathcal{M}(\Omega)$ (total variation norm), i.e., $|\nu_n-\nu|(\Omega)\to0$ as $n\to\infty$.
\end{Lemma}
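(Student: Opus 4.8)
The plan is to reduce the statement to the standard structural description of non-negative measures in $M^p_0(\Omega)$ as those admitting an exhausting ``nest'' of compact sets on which the measure has finite $W^{-1,p'}$--energy, and then to produce the $\nu_n$ simply by restriction. The single substantive ingredient I would invoke is the following, which is also the step I expect to be the main obstacle.

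\emph{Claim.} Since $\nu$ is a non-negative bounded Radon measure with $\nu\in M^p_0(\Omega)$, there exist compact sets $K_1\subseteq K_2\subseteq\cdots\subseteq\Omega$ such that $\nu\big(\Omega\setminus\bigcup_{n}K_n\big)=0$ and such that the restriction of $\nu$ to $K_n$ belongs to $W^{-1,p'}(\Omega)$ for every $n\in\mb N$.

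Granting the Claim, the rest is routine and would go as follows. Set $\nu_n(E):=\nu(E\cap K_n)$ for Borel $E\subseteq\Omega$. Then each $\nu_n$ is a non-negative bounded Radon measure (it is dominated by $\nu$), it lies in $W^{-1,p'}(\Omega)$ by the Claim, and, since $K_m\subseteq K_n$ for $m\le n$, we have $\nu_m(E)=\nu(E\cap K_m)\le\nu(E\cap K_n)=\nu_n(E)$, so $\{\nu_n\}$ is increasing. Because $\{K_n\}$ is increasing, the sets $\Omega\setminus K_n$ decrease to $\Omega\setminus\bigcup_n K_n$, and continuity from above of the finite measure $\nu$ gives
\[
|\nu_n-\nu|(\Omega)=\nu(\Omega\setminus K_n)\longrightarrow\nu\Big(\Omega\setminus\bigcup_n K_n\Big)=0,
\]
which is exactly convergence in the total variation norm. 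This finishes the argument modulo the Claim.

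The Claim is the heart of the matter, and it is precisely where the hypothesis $\nu\in M^p_0(\Omega)$ is indispensable: the conclusion of the Lemma fails, e.g., for a Dirac mass when $p\le N$, and correspondingly such a measure does charge a set of zero $p$-capacity. The route I would take is potential-theoretic: using Theorem \ref{dec1} one writes $\nu=f-\mathrm{div}(G)$ with $0\le f\in L^1(\Omega)$ and $G\in(L^{p'}(\Omega))^N$, associates with $\nu$ a quasi-continuous function controlling its capacitary energy, and builds $K_n$ by intersecting its level sets with an exhaustion $\{x\in\Omega:\mathrm{dist}(x,\partial\Omega)\ge 1/n\}\cap\overline{B(0,n)}$; absolute continuity with respect to $p$-capacity guarantees that the discarded exceptional set is $\nu$-null, while the energy bound places each restricted measure in $W^{-1,p'}(\Omega)$. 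For $p=2$ this is the classical theory of smooth measures and their nests; for general $p$ it is the content of the cited works, which I would invoke directly rather than reprove. I note that the more elementary idea of truncating $f$ and mollifying $G$ in the decomposition $\nu=f-\mathrm{div}(G)$ does yield a sequence in $W^{-1,p'}(\Omega)$ converging to $\nu$ distributionally, but it destroys both non-negativity and monotonicity; it is exactly the simultaneous control of the sign and of the order that the nest construction provides and that makes it unavoidable here.
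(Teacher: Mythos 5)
The paper gives no proof of Lemma \ref{App2}; it simply cites \cite{meapp,DAL}, so your sketch is being compared against a bare citation. Your reduction to the nest Claim is sound---restricting $\nu$ to an increasing exhaustion of compact sets preserves positivity and monotonicity, and continuity from above together with $\nu\bigl(\Omega\setminus\bigcup_n K_n\bigr)=0$ gives total-variation convergence---and that Claim is precisely the structural characterization of non-negative measures in $M^p_0(\Omega)$ established in the works the paper cites, so you have correctly identified the same source and unpacked why it yields the Lemma rather than taken a different route.
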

For the next result we refer \cite[Proposition 2.7]{MD}. Before stating the result, we note that it is well-known that for every $u\in W^{1,p}(\Omega)$, there exists $\mathrm{cap}_p$-quasi continuous representative $\Tilde{u}$ of $u$, i.e.,
\begin{itemize}
    \item [(i)] $u=\Tilde{u}$ a.e. in $\Omega$,
    \item[(ii)] for every $\epsilon>0$, there exists a set $S\subset\Omega$ such that $\mathrm{cap}_p(S)<\epsilon$ and $\Tilde{u}$ is continuous in $\Omega\setminus S$. 
\end{itemize}

\begin{Lemma}\label{Bdd}
     Suppose $1<p<\infty$ and $\nu\in M^p_0(\Omega)$ is a non-negative measure. Assume that $u\in W^{1,p}_0(\Omega)\cap L^\infty(\Omega)$ is a non-negative function. Then, up to the choice of its $\mathrm{cap}_p$-quasi continuous representative, $u\in L^\infty(\Omega,\nu)$, and $$\int_\Omega u\;d\nu\leq \|u\|_{L^\infty(\Omega)}\,\nu(\Omega).$$
\end{Lemma}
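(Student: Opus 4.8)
The plan is to deduce the claim from a pointwise bound on the $\mathrm{cap}_p$-quasi continuous representative $\widetilde u$ of $u$ that holds $\mathrm{cap}_p$-quasi-everywhere, and then to exploit that a measure in $M^p_0(\Omega)$ does not charge sets of zero $p$-capacity. Write $M:=\|u\|_{L^\infty(\Omega)}$, so that $0\le u\le M$ a.e. in $\Omega$; note also that $\nu(\Omega)<\infty$ since $\nu\in M(\Omega)$ and $\Omega$ is bounded, so the right-hand side is finite.

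\textbf{Step 1.} First I would show that $0\le\widetilde u\le M$ outside a set of zero $p$-capacity. By density, choose continuous functions $u_n$ (for instance $u_n\in C_c^\infty(\Omega)$) with $u_n\to u$ in $W^{1,p}(\Omega)$, and set $w_n:=\min\{u_n^+,M\}$. Since $v\mapsto\min\{v^+,M\}$ is a Lipschitz truncation acting continuously on $W^{1,p}(\Omega)$ and $\min\{u^+,M\}=u$ a.e., we still have $w_n\to u$ in $W^{1,p}(\Omega)$, each $w_n$ is continuous, and $0\le w_n\le M$ everywhere in $\Omega$. By the standard capacity theory (cf. \cite{Heinonen}), a $W^{1,p}$-convergent sequence of quasi-continuous functions has a subsequence converging $\mathrm{cap}_p$-quasi-everywhere to the quasi-continuous representative of the limit; hence along a subsequence $w_{n_k}\to\widetilde u$ quasi-everywhere, and passing to the limit in $0\le w_{n_k}\le M$ yields $0\le\widetilde u\le M$ outside a set $E$ with $\mathrm{cap}_p(E)=0$.

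\textbf{Step 2.} Next, since $\nu\in M^p_0(\Omega)$ we have $\nu(E)=0$, so the bound of Step 1 holds $\nu$-a.e.; moreover a quasi-continuous function is measurable with respect to any measure vanishing on sets of zero $p$-capacity, so $\widetilde u$ is $\nu$-measurable. Therefore $\widetilde u\in L^\infty(\Omega,\nu)$ with $\|\widetilde u\|_{L^\infty(\Omega,\nu)}\le M$, and, identifying $u$ with $\widetilde u$,
$$\int_\Omega u\,d\nu=\int_\Omega\widetilde u\,d\nu\le M\,\nu(\Omega)=\|u\|_{L^\infty(\Omega)}\,\nu(\Omega),$$
which is the assertion.

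\textbf{Main obstacle.} The only genuinely delicate point is Step 1: upgrading the bounds $0\le u\le M$, which hold a priori only Lebesgue-a.e., to bounds on $\widetilde u$ valid $\mathrm{cap}_p$-quasi-everywhere. The truncation trick is what makes this work — it produces an approximating sequence obeying the bounds pointwise everywhere — after which q.e.-convergence of a subsequence, together with q.e.-uniqueness of the quasi-continuous representative, transfers the bounds to $\widetilde u$. Alternatively one could use Theorem \ref{dec1} to write $\nu=f-\mathrm{div}(G)$ with $0\le f\in L^1(\Omega)$ and $G\in(L^{p'}(\Omega))^N$ and estimate $\int_\Omega uf\,dx+\int_\Omega G\cdot\nabla u\,dx$, but giving meaning to the duality pairing for merely bounded $u\in W^{1,p}_0(\Omega)$ leads back to the same quasi-continuity considerations, so the route above is the cleanest.
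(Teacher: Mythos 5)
Your proof is correct. Note that the paper does not actually prove this lemma: it is quoted directly from the reference \cite[Proposition 2.7]{MD}, so there is no in-text argument to compare against. Your proof is the standard one underlying that citation — truncate a quasi-continuous (e.g.\ smooth) approximating sequence so that the bounds $0\le w_n\le M$ hold pointwise everywhere, use the fact that a $W^{1,p}$-convergent sequence of quasi-continuous functions has a subsequence converging $\mathrm{cap}_p$-q.e.\ to the quasi-continuous representative of the limit, and then invoke the absolute continuity of $\nu$ with respect to $p$-capacity together with the $\nu$-measurability of quasi-continuous functions. The one point worth making explicit is that the paper's $W^{1,p}_0(\Omega)$ is defined as $\{u\in W^{1,p}(\mathbb{R}^N): u=0 \text{ in } \mathbb{R}^N\setminus\Omega\}$; since $\Omega$ is assumed Lipschitz this coincides with the closure of $C^\infty_c(\Omega)$, so your density step is legitimate, but it is worth saying. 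Your closing remark is also apt: the alternative route via the decomposition $\nu=f-\mathrm{div}(G)$ of Theorem \ref{dec1} does not avoid the quasi-continuity issue, so the argument you give is the right one.
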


\begin{Definition}\label{ntop}
A sequence {$\{\mu_n\}_{n\in\mathbb{N}}\subset M(\Omega)$} is said to converge to a measure $\mu\in M(\Omega)$ in narrow topology if for every $\phi\in C^\infty_c(\Omega)$, we have $$\lim_{n\to\infty}\int_\Omega \phi d\mu_n=\int_\Omega \phi d\mu.$$
\end{Definition}

Before stating our main results below, we define the condition $(P_{\epsilon,\delta_*})$ below.\\
\textbf{Condition $(P_{\epsilon,\delta_*})$:} We say that a continuous function $\delta:\overline{\Omega}\to (0,\infty)$, satisfies the condition $(P_{\epsilon,\delta_*})$, if there exist $\delta_*\geq 1$ and $\epsilon>0$ such that $\delta(x)\leq \delta_*$ for every $x\in\Omega_\epsilon$, where 
$
\Omega_\epsilon:=\{y\in\Omega:\text{dist\,}(y,\partial\Omega)<\epsilon\}.
$
\subsection{Main results}
We state our main existence results below.
\begin{Theorem}\label{Theorem1}(Variable singular exponent)
   Let $\delta:\overline{\Omega}\to(0,\infty)$ be a continuous function such that $\delta$ is locally Lipschitz continuous in $\Omega$, $|\nabla\delta|\in L^N(\Omega)$, and $\delta$ satisfies $P_{\epsilon,\delta_*}$ for some $\;\delta_*\geq 1$ and for some $\epsilon>0$.
   
     Suppose $\mu,\;\nu$ are non-negative bounded Radon measures on $\Omega$, with $\nu\in M^p_0(\Omega)\setminus\{0\}$ for some $1<p<\frac{N}{N-1}$. Then the problem $(\ref{ME})$ admits a weak solution $u\in W^{1,p}_{\mathrm{loc}}(\Omega)\cap L^1(\Omega)$ in the sense of Definition \ref{def1} such that
\begin{enumerate}
    \item[(i)]If $\delta_*=1$, then $u\in W^{1,p}_0(\Omega)$.
    \item[(ii)] If $\delta_*>1$, then $T_k(u)\in W^{1,2}_{\mathrm{loc}}(\Omega)$ such that $T_k^\frac{\delta_*+1}{2}(u)\in W^{1,2}_0(\Omega)$ for every $k>0$.
\end{enumerate}
\end{Theorem}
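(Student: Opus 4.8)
The plan is to follow the approximation strategy announced in the introduction, inspired by \cite{OPdie}, replacing the singular measure $\nu$ by the monotone sequence $\{\nu_n\}\subset W^{-1,p'}(\Omega)$ from Lemma \ref{App2} and truncating both the singular nonlinearity and the exponent. Concretely, I would first construct approximate problems
\begin{equation*}
-\mathcal{A}u_n+\mathcal{B}u_n=\frac{\nu_n}{(u_n+\frac1n)^{\delta_n(x)}}+\mu_n \quad\text{in }\Omega,\qquad u_n=0 \text{ in }\mathbb{R}^N\setminus\Omega,
\end{equation*}
where $\mu_n=T_n(\mu_a)+\mu_s*\rho_n$ (or a similar smooth approximation of $\mu$) and $\delta_n$ is a truncation/mollification of $\delta$ making the right-hand side bounded; the existence of a positive $u_n\in W^{1,2}_0(\Omega)\cap L^\infty(\Omega)$ for each fixed $n$ is precisely the content of the Appendix, and it is here that the hypothesis $|\nabla\delta|\in L^N(\Omega)$ is used (together with local Lipschitz continuity of $\delta$, to control terms like $\nabla\delta\cdot\nabla u$ arising when one tests with powers of $u$). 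Monotonicity of $\{\nu_n\}$ together with the comparison principle for $\mathcal{M}$ gives that $\{u_n\}$ is monotone increasing in $n$, so $u:=\lim_n u_n$ exists pointwise; monotonicity also yields, for every $\omega\Subset\Omega$, a uniform lower bound $u_n\geq u_1\geq C(\omega)>0$ on $\omega$ (since $u_1>0$ solves a problem with $\nu_1\not\equiv 0$, using Lemma \ref{App2} to arrange $\nu_1\neq 0$, and the strong maximum principle / Harnack for the mixed operator), giving property (a) of Definition \ref{def1}.

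The core of the argument is the set of a priori estimates that let us pass to the limit. I would first prove the global $L^1$ bound $\|u_n\|_{L^1(\Omega)}\le C$: testing with a fixed positive solution $\psi$ of $\mathcal{M}\psi=1$ with $\psi=0$ outside $\Omega$, splitting $\Omega$ into $\{u_n\le 1\}$ and $\{u_n>1\}$, and using on $\Omega_\epsilon$ the bound $\delta\le\delta_*$ to control $\int \psi\,(u_n+\frac1n)^{-\delta(x)}d\nu_n$ — this is exactly where condition $(P_{\epsilon,\delta_*})$ enters, since near $\partial\Omega$ the test function $\psi$ vanishes only like $\mathrm{dist}(x,\partial\Omega)$ and we need $\psi^{1-\delta_*}$-type quantities to be controllable. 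Next, for the gradient estimates I would distinguish the two cases of the theorem. When $\delta_*=1$: test the approximate equation with $u_n$ itself (or with $G_k(u_n)$ plus a Stampacchia-type argument) to get $\int_\Omega|\nabla u_n|^2 + [u_n]_s^2 \le \int \frac{u_n}{(u_n+1/n)^{\delta_n}}d\nu_n + \int u_n\,d\mu_n$; on $\{u_n\ge 1\}$ the singular term is bounded by $\nu_n(\Omega)\le\nu(\Omega)$ via Lemma \ref{Bdd}, and on $\{u_n<1\}$ in $\Omega_\epsilon$ one uses $\delta\le 1$; combined with a truncation argument à la Boccardo–Gallouët this yields $u_n$ bounded in $W^{1,p}_0(\Omega)$ for every $p<\frac{N}{N-1}$, which after extracting a weakly convergent subsequence gives $u\in W^{1,p}_0(\Omega)$. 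When $\delta_*>1$: the solution need not be in any global Sobolev space, so I would instead test with $T_k(u_n)^{\delta_*}$ (or $(T_k(u_n)+\tfrac1n)^{\delta_*}-(\tfrac1n)^{\delta_*}$) to obtain, after an integration by parts producing a constant $\tfrac{4\delta_*}{(\delta_*+1)^2}$, the bound $\int_\Omega |\nabla T_k^{(\delta_*+1)/2}(u_n)|^2\,dx\le C k^{\delta_*}\big(\nu(\Omega)+\mu(\Omega)\big)$ uniformly in $n$, which gives $T_k^{(\delta_*+1)/2}(u)\in W^{1,2}_0(\Omega)$; a separate local test with $\xi^2 T_k(u_n)$ for $\xi\in C_c^\infty(\Omega)$ together with the already-established positive lower bound on $\mathrm{supp}\,\xi$ gives $T_k(u)\in W^{1,2}_{\mathrm{loc}}(\Omega)$, and interior Caccioppoli/Boccardo–Gallouët estimates give $u\in W^{1,p}_{\mathrm{loc}}(\Omega)$ for $p<\frac{N}{N-1}$.

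Finally I would pass to the limit in the weak formulation \eqref{SC}. Fix $\phi\in C_c^\infty(\Omega)$, with $K=\mathrm{supp}\,\phi\Subset\Omega$; on $K$ we have $u_n\ge C(K)>0$, so $\frac{\phi}{(u_n+1/n)^{\delta_n(x)}}\to\frac{\phi}{u^{\delta(x)}}$ uniformly on $K$ (using continuity of $\delta$ and $\delta_n\to\delta$), and since $\nu_n\to\nu$ in total variation norm (Lemma \ref{App2}) the right-hand singular term converges; $\int\phi\,d\mu_n\to\int\phi\,d\mu$ by construction. For the left-hand side, the local gradient bounds give $\nabla u_n\rightharpoonup\nabla u$ weakly in $L^p(K)$, handling the local term $\int A(x)\nabla u_n\cdot\nabla\phi$; for the nonlocal bilinear form one splits the integral over $\mathbb{R}^N\times\mathbb{R}^N$ into the region meeting $\mathrm{supp}\,\phi$ (controlled by the local $W^{s,p}$-bound which follows from $W^{1,p}_{\mathrm{loc}}$ by Lemma \ref{locnon1}-type localization) and the tail region where $\phi(x)-\phi(y)$ has fixed compact support and $u_n\to u$ in $L^1$ with the uniform $L^1$ bound controlling the tail, then uses dominated/monotone convergence. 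This shows $u$ is a weak solution in the sense of Definition \ref{def1}. I expect the main obstacle to be the a priori estimates near $\partial\Omega$ in the case $\delta_*>1$ — reconciling the lack of a global energy bound with the need to identify the limit, which is exactly what forces the truncated-power test functions and the hypothesis $(P_{\epsilon,\delta_*})$ — together with establishing existence for the approximate problems, where the term $|\nabla\delta|\in L^N(\Omega)$ must be absorbed via Sobolev's inequality.
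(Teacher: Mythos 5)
Your overall architecture coincides with the paper's: approximate $\nu$ by the increasing sequence $\{\nu_n\}\subset W^{-1,p'}(\Omega)$ from Lemma \ref{App2}, approximate $\mu$ by bounded functions converging narrowly, solve the truncated problems (the Appendix, where $|\nabla\delta|\in L^N(\Omega)$ is indeed used), obtain the uniform local lower bound by comparison with a fixed positive subsolution, derive Boccardo--Gallou\"et-type estimates with $T_l(u_n)$ for $\delta_*=1$ and with $T_k^{\delta_*}(u_n)$ for $\delta_*>1$, and pass to the limit. Two small inaccuracies first: there is no need to mollify $\delta$ (the paper keeps it fixed), and your claim that $\{u_n\}$ is monotone in $n$ does not follow, since your $\mu_n$ need not be monotone; the paper instead compares every $u_n$ with one fixed solution $w$ of the problem with data $\nu_1$, $\tau=1$, $g=0$, which suffices for the uniform lower bound.

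The genuine gap is in the limit passage for the singular term $\int_\Omega \frac{\phi}{(u_n+\frac1n)^{\delta(x)}}\,d\nu_n$. You assert that $\frac{\phi}{(u_n+1/n)^{\delta(x)}}\to\frac{\phi}{u^{\delta(x)}}$ \emph{uniformly} on $\mathrm{supp}\,\phi$ and conclude by total-variation convergence of $\nu_n$. But $u_n\to u$ only pointwise Lebesgue-a.e.\ (from weak $W^{1,p}_{\mathrm{loc}}$ convergence), not uniformly; and, decisively, when $\nu$ is \emph{singular} it is concentrated on a Lebesgue-null set, so Lebesgue-a.e.\ convergence of the integrand gives no information $\nu$-a.e., and even the object $\int\frac{\phi}{u^{\delta}}\,d\nu$ is only meaningful through a quasi-continuous representative. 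This is exactly the difficulty the paper's proof is built to overcome: it splits
\begin{equation*}
\int_\Omega \frac{\phi}{(u_n+\frac1n)^{\delta(x)}}\,d\nu_n=\int_\Omega \frac{\phi}{(u_n+\frac1n)^{\delta(x)}}\,d\nu-\int_\Omega \frac{\phi}{(u_n+\frac1n)^{\delta(x)}}\,d(\nu-\nu_n),
\end{equation*}
kills the second term by the uniform $L^\infty$ bound of the integrand on $\mathrm{supp}\,\phi$ (via Lemma \ref{Bdd} and $|\nu-\nu_n|(\Omega)\to0$), and handles the first by invoking Theorem \ref{dec1} to write $\nu=H-\mathrm{div}(G)$ with $H\in L^1(\Omega)$, $G\in (L^{p'}(\Omega))^N$, so that the pairing becomes the Lebesgue integrals $\int \frac{H\phi}{(u_n+1/n)^{\delta}}\,dx+\int G\cdot\nabla\bigl(\frac{\phi}{(u_n+1/n)^{\delta}}\bigr)\,dx$; the first converges by dominated convergence and the second by weak $L^p$ convergence of the gradients against $G\in L^{p'}$ --- this is precisely where the restriction $p<\frac{N}{N-1}$ and, again, the control of $\nabla\delta$ are consumed. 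Without this (or an equivalent) mechanism your limit passage does not close, so you should incorporate the decomposition $\nu\in L^1(\Omega)+W^{-1,p'}(\Omega)$ into the argument.
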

\begin{Remark}
   Furthermore, upon close examination of the proof of Theorem \ref{Theorem1}, it becomes evident that the condition $|\nabla\delta|\in L^N(\Omega)$ plays a crucial role in establishing the existence of the approximate solution as shown in Lemma \ref{Existence}. In contrast, as shown in \cite{BG}, this assumption was not required in the non-singular case. As a result, when $\nu$ is non-singular, Theorem \ref{Theorem1} agrees with \cite[Theorem 2.12]{BG}, provided the additional condition $|\nabla\delta|\in L^N(\Omega)$ holds. On the other hand, Theorem \ref{Theorem1} is completely new when $\nu$ is a singular measure. 
\end{Remark}

When $\delta>0$ is a constant and $\mu$ is a suitable Lebesgue integrable function, we have the following existence result for a more general class of measures $\nu$.

\begin{Theorem}\label{Theorem2}(Constant singular exponent)
    Assume that $\delta:\overline{\Omega}\to(0,\infty)$ is a constant function. We define
    $$p=\begin{cases}
    \frac{N(\delta+1)}{N+\delta-1}, \text{ if }0<\delta<1,\\
    2, \text{ if }\delta\geq 1.
\end{cases}$$
{Let $\mu\in L^\frac{N(\delta+1)}{N+2\delta}(\Omega )$ be a non-negative function in $\Omega$. 
 Further, assume that $\nu\in M^{p}_0(\Omega)\setminus\{0\}$ is a non-negative  bounded Radon measure on $\Omega$.} Then the equation (\ref{ME}) admits a weak solution $u\in W^{1,p}_{\mathrm{loc}}(\Omega)\cap L^1(\Omega)$ in the sense of Definition \ref{def1} such that
\begin{enumerate}
    \item [(i)] If $0<\delta\leq 1$, then $u\in W_0^{1,p}(\Omega)$.
    \item[(ii)] If $\delta>1$, then $u\in W_{\mathrm{loc}}^{1,2}(\Omega)$ such that $u^\frac{\delta+1}{2}\in W_0^{1,2}(\Omega)$. 
\end{enumerate}
\end{Theorem}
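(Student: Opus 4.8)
\textbf{Proof strategy for Theorem \ref{Theorem2}.}

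The plan is to treat Theorem \ref{Theorem2} as a special case of the machinery developed for Theorem \ref{Theorem1}, but exploiting the extra structure (constant $\delta$, $\mu$ in a Lebesgue space rather than just a measure) to enlarge the admissible class of measures $\nu$ and to sharpen the regularity conclusions. First I would set up an approximation scheme. Since $\nu\in M^p_0(\Omega)$ with $p$ as defined in the statement, Lemma \ref{App2} furnishes a non-negative increasing sequence $\{\nu_n\}\subset W^{-1,p'}(\Omega)$ with $\nu_n\to\nu$ in total variation; by Theorem \ref{dec1} each $\nu_n$ splits as $f_n-\mathrm{div}(G_n)$ with $f_n\ge 0$. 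I would then consider the regularized problems
\begin{align*}
\begin{cases}
\mathcal{M}u_n=\dfrac{\nu_n}{(u_n+\frac{1}{n})^{\delta}}+\mu_n \text{ in }\Omega,\\
u_n=0 \text{ in }\mathbb{R}^N\setminus\Omega,
\end{cases}
\end{align*}
where $\mu_n=T_n(\mu)$ (or $\mu$ itself, since it is already in the relevant Lebesgue space), and invoke the existence result for the approximate problem (the Appendix, Lemma \ref{Existence}) to obtain $u_n\in W^{1,2}_0(\Omega)\cap L^\infty(\Omega)$; here the hypothesis $|\nabla\delta|\in L^N(\Omega)$ is vacuous because $\delta$ is constant, which is precisely why the constant-exponent case needs no such assumption and admits a cleaner statement. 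Monotonicity of $\{\nu_n\}$ plus a comparison principle gives that $\{u_n\}$ is increasing, hence pointwise convergent to some $u$, and on each $\omega\Subset\Omega$ a uniform positive lower bound $u_n\ge u_1\ge C(\omega)>0$, giving item (a) of Definition \ref{def1}.

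The core is the a priori estimates, and this is where the two regimes diverge. For $0<\delta<1$ I would test the approximate equation with $u_n$ itself (or with $T_k(u_n)$ and truncated powers): on the left the quadratic gradient term and the Gagliardo seminorm appear, while on the right $\int \nu_n u_n^{1-\delta}/(u_n+\frac1n)^{\delta}\cdot(\dots)$ is controlled because $1-\delta>0$ makes $u_n^{1-\delta}$ sublinear; combined with Lemma \ref{Bdd} to handle $\int u_n\,d\nu_n$ and Hölder/Sobolev on the $\mu$-term (this is exactly where the exponent $\frac{N(\delta+1)}{N+2\delta}$ for $\mu$ is calibrated so that the estimate closes in $W^{1,p}_0$ with $p=\frac{N(\delta+1)}{N+\delta-1}$), one obtains a uniform bound on $\|u_n\|_{W^{1,p}_0(\Omega)}$. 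For $\delta\ge 1$ one cannot expect a global energy bound; instead I would test with $T_k(u_n)$ to get $T_k(u)\in W^{1,2}_{\loc}$, and crucially test with $(u_n+\frac1n)^{\delta}-(\frac1n)^{\delta}$ (or its truncation), so that the singular right-hand side becomes $\int \nu_n\,(1-(\dots))$ which is bounded by $\nu_n(\Omega)\le C$; the left-hand side then produces $\int A\nabla u_n\cdot\nabla u_n\,(u_n+\frac1n)^{\delta-1}$, i.e. control of $\nabla (u_n+\frac1n)^{\frac{\delta+1}{2}}$ in $L^2$, yielding $u^{\frac{\delta+1}{2}}\in W^{1,2}_0(\Omega)$ after passing to the limit. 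Local $W^{1,2}$ bounds for $u_n$ themselves follow from the lower bound $u_n\ge C(\omega)$ on $\omega\Subset\Omega$, which upgrades the estimate on $(u_n+\frac1n)^{\frac{\delta+1}{2}}$ to one on $u_n$ locally.

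With the uniform estimates in hand, I would extract a subsequence converging weakly in the appropriate space, strongly in $L^1_{\loc}$ (and a.e.), identify the weak limit with the monotone pointwise limit $u$, and pass to the limit in the weak formulation tested against $\phi\in C^\infty_c(\Omega)$: the local gradient and nonlocal bilinear terms pass by weak convergence (using Lemma \ref{locnon1} to control the tail), the $\mu$-term by dominated convergence, and the singular term $\int \phi\,(u_n+\frac1n)^{-\delta}\,d\nu_n\to\int \phi\,u^{-\delta}\,d\nu$ by combining the a.e. convergence $u_n\to u$, the uniform lower bound on $\supp\phi$ (which makes the integrand uniformly bounded there), and the total-variation convergence $\nu_n\to\nu$. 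I expect the \textbf{main obstacle} to be this last limit passage in the singular term when $\nu$ has a nontrivial singular part: one must argue that $u$ is still $\mathrm{cap}_p$-quasicontinuous and finite $\nu$-a.e. on the concentration set, and that no mass is lost; the monotone approximation of $\nu$ together with the strict positivity of $u$ away from $\partial\Omega$ is what rescues this, but the bookkeeping near $\partial\Omega$ — reconciling the behavior of $\delta$ (through $P_{\epsilon,\delta_*}$, here trivially satisfied since $\delta$ is constant $\ge$ or $<1$) with the decay of $u$ — is the delicate point, and is the reason the conclusion in case (ii) is only the local $W^{1,2}$ statement plus the global bound on the power $u^{(\delta+1)/2}$.
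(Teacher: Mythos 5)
Your overall scheme coincides with the paper's: approximate $\nu$ by the increasing sequence $\{\nu_n\}\subset W^{-1,p'}(\Omega)$ from Lemma \ref{App2}, truncate $\mu$ to $T_n(\mu)$, solve the approximate problems via the Appendix (Lemma \ref{Existence}, where indeed $|\nabla\delta|\in L^N(\Omega)$ is automatic for constant $\delta$), get the uniform positive lower bound on compact subsets by comparison (Lemma \ref{LD11}), derive a priori estimates by testing with truncations and powers (Lemma \ref{BL2}), and pass to the limit. Two points, however, need attention. First, for $0<\delta<1$ your primary suggestion of testing with $u_n$ itself does not close: the singular term becomes essentially $\int_\Omega u_n^{1-\delta}\,d\nu_n$, and since $\nu_n$ is a measure you have no a priori $L^\infty$ bound on $u_n^{1-\delta}$ to feed into Lemma \ref{Bdd}; moreover the conclusion $u\in W^{1,p}_0(\Omega)$ with $p=\frac{N(\delta+1)}{N+\delta-1}<2$ already signals that a global $W^{1,2}$ energy estimate is not available. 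The correct choice (which your hedge ``truncated powers'' points toward, and which the paper uses) is $\phi=(T_l(u_n)+\epsilon)^{\delta}-\epsilon^{\delta}$, so that the singular integrand $\frac{(T_l(u_n)+\epsilon)^{\delta}}{(u_n+\frac1n)^{\delta}}$ is bounded by a constant and the measure term is controlled by $\nu_n(\Omega)$ alone; the $W^{1,p}_0$ bound then follows from a Marcinkiewicz/H\"older bootstrap.

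Second, and more seriously, the limit passage in the singular term is a genuine gap as written. Your proposed mechanism --- a.e.\ convergence of $u_n$, uniform boundedness of $\phi/(u_n+\frac1n)^{\delta}$ on $\supp\phi$, plus total-variation convergence $\nu_n\to\nu$ --- does not yield $\int_\Omega\frac{\phi}{(u_n+\frac1n)^{\delta}}\,d\nu\to\int_\Omega\frac{\phi}{u^{\delta}}\,d\nu$ when $\nu$ is singular, because Lebesgue-a.e.\ convergence of $u_n$ says nothing about convergence on the $\nu$-concentration set (which has Lebesgue measure zero). You correctly flag this as the main obstacle, but the rescue you sketch (quasicontinuity of $u$, ``no mass is lost'') is not carried out. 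The paper's actual resolution is to split
$\int_\Omega\frac{\phi}{(u_n+\frac1n)^{\delta}}\,d\nu_n=\int_\Omega\frac{\phi}{(u_n+\frac1n)^{\delta}}\,d\nu-\int_\Omega\frac{\phi}{(u_n+\frac1n)^{\delta}}\,d(\nu-\nu_n)$,
kill the second piece by the uniform bound on $\supp\phi$ times $|(\nu-\nu_n)(\Omega)|\to0$, and for the first piece invoke Theorem \ref{dec1} to write $\nu=H-\mathrm{div}(G)$ with $H\in L^1(\Omega)$, $G\in(L^{p'}(\Omega))^N$, integrate by parts, and pass to the limit via dominated convergence in the $H$-term and weak convergence of $\nabla\bigl(\phi\,(u_n+\tfrac1n)^{-\delta}\bigr)$ against $G$ in the divergence term. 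This converts everything into statements about Lebesgue integrals, which is exactly what your a.e.\ convergence can handle; without this (or an equivalent capacitary argument), the proof is incomplete.
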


\begin{Remark}\label{mrmk}
One can observe along the lines of the proofs of our main existence results that Theorems \ref{Theorem1} and \ref{Theorem2} hold even for the purely local equation that can be obtained by replacing $\mathcal{M}$ with the operator $\mathcal{A}$ in the equation \eqref{ME} where $\mathcal{A}u=\text{div}(A(x)\nabla u)$ with $A:\Omega\to\mathbb{R}^{N^2}$ a bounded elliptic matrix satisfying \eqref{lkernel}. To the best of our knowledge, such results are new even in the purely local case.
\end{Remark}

\section{Proof of the main results}
\subsection{Proof of {Theorem \ref{Theorem1}}}
Suppose that $\delta:\overline{\Omega}\to (0,\infty)$ is a continuous function and satisfies the properties (a), (b), and (c). That is, there exists $\delta_*\geq 1$ and $\epsilon>0$ such that $$\delta(x)\leq \delta_* \text{ in }\Omega_\epsilon.$$
We also assume that $\nu\in M^p_0(\Omega)\setminus\{0\}$ for some $1<p<\frac{N}{N-1}$, is a non-negative bounded Radon measure on $\Omega$ and $\mu$ is a non-negative bounded Radon measure on $\Omega$.

Since $\nu\in M^p_0(\Omega)$, by Theorem \ref{dec1}, there exists $0\leq H\in L^1(\Omega)$ and $G\in (L^{p'}(\Omega))^N$ such that $\nu=H-\mathrm{div}\, G$ (in distributional sense). Furthermore, due to Lemma \ref{App2}, there exists an increasing sequence of non-negative measures $\{\nu_n\}_{n\in\mathbb{N}}\subset W^{-1,p'}(\Omega)\setminus\{0\}$ in $\Omega$ such that $\nu_n\to\nu$ in $\mathcal{M}(\Omega)$ (total variation norm). Since $\mu$ is a non-negative bounded Radon measure on $\Omega$, there exists a {non-negative} sequence $\{g_n\}_{n\in\mathbb{N}}\subset L^\infty(\Omega)$ such that $||g_n||_{L^1(\Omega)}\leq C$ {for some constant $C>0$ independent of $n$} and $g_n\rightharpoonup \mu$ in the narrow topology (\cite[Theorem A.7]{OLP}). For each $n\in\mathbb{N}$, we consider the following approximation of the given equation (\ref{ME}):
\begin{align}\label{DP}
    \begin{cases}
        \mathcal{M}u=\frac{\nu_n}{(u+\frac{1}{n})^{\delta(x)}}+{g_n} \text{ in } \Omega,\\
        u=0 \text{ in }\mathbb{R}^N\setminus\Omega \text{ and } u>0 \text{ in }\Omega.
    \end{cases}
\end{align}
    By Lemmas \ref{Existence}-\ref{LD11}, for each $n\in\mathbb{N}$, there exists a unique weak solution $u_n\in W^{1,2}_0(\Omega)$ to the equation (\ref{DP}) such that for every $\omega\Subset\Omega$, there exists a constant $C(\omega)>0$ {(independent of $n$)} such that $u_n\geq C(\omega)$ in $\omega$ for all $n$. {By the weak formulation of (\ref{DP}), for every $\phi\in C^\infty_c(\Omega)$, we get
\begin{align}\label{A16}
\int_\Omega &A(x)\nabla u_n\cdot\nabla\phi \, dx+\int_{\mathbb{R}^N}\int_{\mathbb{R}^N}(u_n(x)-u_n(y))(\phi(x)-\phi(y))K(x,y)\, dx dy\nonumber\\
&=\int_\Omega \frac{\phi}{(u_n+\frac{1}{n})^{\delta(x)}}\, d\nu_n+\int_\Omega\phi g_n\, dx.
\end{align}
We pass to the limit in \eqref{A16} for the cases $\delta_*=1$ and $\delta_*>1$ below.}
    \begin{enumerate}
\item[$(i)$] {Let $\delta_*=1$. Since $\nu\in M^p_0(\Omega)$ and $0\leq\nu_n\leq\nu$, we have $\nu_n\in M^p_0(\Omega)$. Furthermore, since $1<p<\frac{N}{N-1}$ and $\nu_n\in W^{-1,p'}(\Omega)$, we conclude that the sequence $\{\nu_n\}\subset W^{-1,2}(\Omega)\cap M^2_0(\Omega)$ is uniformly bounded in $\mathcal{M}(\Omega)$. By} Lemma \ref{LD2}-$(a)$, there exists a subsequence of $\{u_n\}_{n\in\mathbb{N}}$, still denoted by {$\{u_n\}_{n\in\mathbb{N}}\subset W_0^{1,p}(\Omega)$} and $u\in W^{1,p}_0(\Omega)$ such that $u_n\rightharpoonup u$ {weakly} in $W^{1,p}_0(\Omega)$ and $u_n\to u$ {pointwise} in $\Omega.$
 Since $u_n\rightharpoonup u$ weakly in  $W^{1,p}_0(\Omega)$, for every $\phi\in C_c^{\infty}(\Omega)$, it follows that 
 \begin{align}\label{A17}
    &\lim_{n\to\infty}\int_\Omega A(x)\nabla u_n\cdot\nabla\phi\, dx=\int_\Omega A(x)\nabla u\cdot\nabla\phi\, dx,
    \end{align}
    and 
    \begin{equation}\label{B00}
    \begin{split}
&\lim_{n\to\infty }\int_{\mathbb{R}^N}\int_{\mathbb{R}^N}(u_n(x)-u_n(y))(\phi(x)-\phi(y))K(x,y)\, dx dy\\
&=\int_{\mathbb{R}^N}\int_{\mathbb{R}^N}(u(x)-u(y))(\phi(x)-\phi(y))K(x,y)\, dx dy.
\end{split}
\end{equation}
Moreover, since $g_n\rightharpoonup\mu$ in {the narrow topology}, for every $\phi\in C_c^{\infty}(\Omega)$, we have
\begin{align}
\lim_{n\to\infty}\int_\Omega \phi g_n\, dx=\int_\Omega \phi d\mu.
\end{align}
{Let $\omega=\mathrm{supp}\,\phi$}, then there exists a constant $C=C(\omega)>0$ {(independent of $n$)} such that $u_n\geq C$ in $\omega$, for all $n$. In order to pass the limit to the integral in the second last term of (\ref{A16}), we break it into two parts as
$$\int_\Omega \frac{\phi}{(u_n+\frac{1}{n})^{\delta(x)}}\, d\nu_n=\int_\Omega \frac{\phi}{(u_n+\frac{1}{n})^{\delta(x)}}\, d\nu-\int_\Omega \frac{\phi}{(u_n+\frac{1}{n})^{\delta(x)}}\, d(\nu-\nu_n).$$
Let us estimate the second term fast as 
\begin{align}\label{Sat}
    \int_\Omega \frac{\phi}{(u_n+\frac{1}{n})^{\delta(x)}}\, d(\nu-\nu_n)&\leq \Big\|\frac{\phi}{(u_n+\frac{1}{n})^{\delta(x)}}\Big\|_{L^\infty(\Omega)}|(\nu-\nu_n)(\Omega)|\nonumber\\
    &\leq\frac{\|\phi\|_{L^\infty(\Omega)}}{\|C^{\delta(x)}\|_{L^\infty(\Omega)}}|(\nu-\nu_n)(\Omega)|\to 0\text{ as }n\to\infty.
\end{align}
Since $\nu=H-\mathrm{div}(G)$, we get
\begin{align}\label{A18}
     \int_\Omega\frac{\phi}{(u_n+\frac{1}{n})^{\delta(x)}}\, d\nu=\int_\Omega\frac{H\phi}{(u_n+\frac{1}{n})^{\delta(x)}}\, dx+\int_\Omega G\cdot \nabla\left(\frac{\phi}{(u_n+\frac{1}{n})^{\delta(x)}}\right)\,dx.
\end{align}
By the Lebesgue's dominated convergence theorem, we obtain
\begin{align}\label{AA2}
    \lim_{n\to\infty}\int_\Omega\frac{H\phi}{(u_n+\frac{1}{n})^{\delta(x)}}\, dx=\int_\Omega\frac{H\phi}{u^{\delta(x)}}\, dx.
\end{align}
Since $\delta$ satisfies (a), along the lines of proof of the estimate of (3.12) on \cite[page 14]{BG}, we obtain
\begin{align}\label{SA2}
    \lim_{n\to\infty}\int_\Omega G\cdot\nabla\left (\frac{\phi}{(u_n+\frac{1}{n})^{\delta(x)}}\right )\, dx=\int_\Omega G\cdot\nabla\left (\frac{\phi}{u^{\delta(x)}}\right )\, dx.
\end{align}
Thus, letting  $n\to\infty$ in both sides of the equality (\ref{A16}) and using (\ref{A17})-(\ref{SA2}), we obtain
\begin{equation*}
\begin{split}
&\int_\Omega A(x)\nabla u\cdot\nabla\phi\, dx+\int_{\mathbb{R}^N}\int_{\mathbb{R}^N}(u(x)-u(y))(\phi(x)-\phi(y))K(x,y)\, dx dy\\
&=\int_\Omega \frac{H\phi}{u^{\delta(x)}}\, dx\nonumber+\int_\Omega G\cdot\nabla\left (\frac{\phi}{u^{\delta(x)}}\right )\, dx+\int_\Omega\phi d\mu\nonumber\\
&=\int_\Omega \frac{\phi}{u^{\delta(x)}}\, d\nu+\int_\Omega \phi d\mu.
\end{split}
\end{equation*}
Hence, $u\in W^{1,p}_0(\Omega)$ is a weak solution of the equation (\ref{ME}).
\item[$(ii)$]
{Let $\delta_*>1$. Since $1<p<\frac{N}{N-1}$,} applying Lemma \ref{LD2}-$(b)$, we conclude that there exists a subsequence of $\{u_n\}_{n\in\mathbb{N}}$, still denoted by {$\{u_n\}_{n\in\mathbb{N}}\subset W^{1,p}_{\mathrm{loc}}(\Omega)$} and $u\in W^{1,p}_{\mathrm{loc}}(\Omega)$ such that 
\begin{align*}
    \begin{cases}
        u_n\rightharpoonup u \text{ weakly in } W^{1,p}_{\mathrm{loc}}(\Omega),\\
        u_n\to u \text{ pointwise in }\Omega.
    \end{cases}
\end{align*}
{In this case by repeating} the similar proof as in $(i)$ above, together with the similar argument as in the proof of the estimate (3.15) on \cite[page 15]{BG}, one can pass to the limit in all the integrals in {(\ref{A16})} and obtain
\begin{align*}
     \int_\Omega A(x)\nabla u\cdot\nabla\phi\, dx+\int_{\mathbb{R}^N}\int_{\mathbb{R}^N}&(u(x)-u(y))(\phi(x)-\phi(y))K(x,y)\, dxdy\nonumber\\
     &=\int_\Omega\frac{\phi}{u^\delta}\, d\nu+\int_\Omega \phi d\mu.
\end{align*}
Thus $u\in W^{1,p}_{\mathrm{loc}}(\Omega)$ solves the equation (\ref{ME}). {Moreover, by Lemma \ref{LD2}-$(b)$ and {Lemma \ref{emb}}, one has the sequence $\{G_1(u_n)\}_{n\in\mathbb{N}}$ is uniformly bounded in $L^1(\Omega)$. Taking this into account along with the fact that $|T_1(u_n)|\leq 1$ in $\Omega$ and $u_n=T_1(u_n)+G_1(u_n)$, we obtain that $\{u_n\}_{n\in\mathbb{N}}$ is uniformly bounded in $L^1(\Omega)$. Thus, by Fatou's Lemma, it follows that $u\in L^1(\Omega).$ Furthermore, using Lemma \ref{LD2}-$(b)$, one obtain that $T_k(u)\in W^{1,2}_{\mathrm{loc}}(\Omega)$ such that $T^\frac{\delta_*+1}{2}_k(u)\in W^{1,2}_0(\Omega)$ for every $k>0$.} 
\end{enumerate}

\subsection{Proof of Theorem \ref{Theorem2}} 
Suppose that $\nu\in M^{p}_0(\Omega)\setminus\{0\}$ is a non-negative bounded Radon measure on $\Omega$, which is singular with respect to the Lebesgue measure, and
 $\{\nu_n\}_{n\in\mathbb{N}}\subset W^{-1,p'}(\Omega)\setminus\{0\}$ is the same sequence discussed in the proof of Theorem \ref{Theorem1}. For each $n\in\mathbb{N}$, we consider the following approximation of the equation (\ref{ME}):
\begin{align}\label{BP}
    \begin{cases}
    \mathcal{M}u=\frac{\nu_n}{(u+\frac{1}{n})^\delta}+T_n(\mu) \text{ in } \Omega,\\
    u=0 \text{ in }\mathbb{R}^N\setminus\Omega \text{ and } u>0 \text{ in }\Omega,
    \end{cases}
\end{align}
where $\mu\in L^\frac{N(\delta+1)}{N+2\delta}(\Omega)$ is a non-negative function in $\Omega$. Finally, we pass to the limit in the weak formulation of \eqref{BP} to conclude the result. This follows by taking into account Lemma \ref{LD11}, Lemma \ref{BL2}-$(a), (b)$ along with Lemma \ref{BL2}-$(c)$ and proceeding along the lines of the proof of Theorem \ref{Theorem1}.

\section{Appendix}
\subsection{Approximate problem}
Throughout this subsection, we assume that $\delta:\overline{\Omega}\to(0,\infty)$ is a continuous function, which is locally Lipschitz continuous in $\Omega$ and  $|\nabla\delta|\in L^N(\Omega)$. Consequently, $|\nabla\delta|\in L^\infty_{\mathrm{loc}}(\Omega)\cap L^N(\Omega)$. We consider the problem
\begin{align}\label{APE0}
    \begin{cases}
        \mathcal{M}u=\frac{\Tilde{\nu}}{(u+\tau)^{\delta(x)}}+g \text{ in }\Omega,\\
        u=0 \text{ in }\mathbb{R}^N\setminus\Omega \text{ and } u>0 \text{ in }\Omega,
    \end{cases}\tag{$P_{\tau,\Tilde{\nu},g}$}
\end{align}
where $\tau>0$ is a constant, $\Tilde{\nu}\in W^{-1,2}(\Omega)\setminus\{0\}$ is a non-negative measure, and $g\in L^2(\Omega)$ is a non-negative function.

\begin{Lemma}\label{Existence}
    There exists a unique weak solution $u\in W^{1,2}_0(\Omega)$ of the problem (\ref{APE0}).
\end{Lemma}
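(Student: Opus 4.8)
The plan is to solve the problem \eqref{APE0} by a fixed-point argument combined with a priori estimates, since the singular term $\frac{\tilde\nu}{(u+\tau)^{\delta(x)}}$ is bounded above (in a suitable sense) once we exploit $\tau>0$. First I would set up the solution map: given $v\in L^2(\Omega)$ non-negative, consider the linear problem $\mathcal{M}w = \frac{\tilde\nu}{(v^++\tau)^{\delta(x)}} + g$ with $w=0$ in $\mathbb{R}^N\setminus\Omega$. The right-hand side here needs to be identified as an element of $W^{-1,2}(\Omega)$: indeed $0\le (v^++\tau)^{-\delta(x)}\le \tau^{-\delta(x)}\le \max\{\tau^{-\sup\delta},\tau^{-\inf\delta}\}=:c_\tau<\infty$ (using continuity of $\delta$ on $\overline\Omega$), so $0\le \frac{\tilde\nu}{(v^++\tau)^{\delta(x)}}\le c_\tau\,\tilde\nu$ as measures, and since $\tilde\nu\in W^{-1,2}(\Omega)$ is non-negative and bounded, a dominated multiple of it is again in $W^{-1,2}(\Omega)$; adding $g\in L^2(\Omega)\hookrightarrow W^{-1,2}(\Omega)$ keeps us in the dual space. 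Then by the Lax–Milgram theorem applied to the bilinear form $a(w,\phi)=\int_\Omega A(x)\nabla w\cdot\nabla\phi\,dx+\int_{\mathbb{R}^N}\int_{\mathbb{R}^N}(w(x)-w(y))(\phi(x)-\phi(y))K(x,y)\,dx\,dy$ on $W^{1,2}_0(\Omega)$ — which is coercive and continuous because of \eqref{lkernel}, \eqref{nkernel}, Lemma \ref{locnon1}, and the equivalence of norms noted after the definition of $\|\cdot\|_{W^{1,2}_0(\Omega)}$ — there is a unique $w=:\mathcal{S}(v)\in W^{1,2}_0(\Omega)$.

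Next I would verify the hypotheses of Schauder's fixed point theorem for $\mathcal{S}$ on a suitable closed convex set. The key a priori bound: testing the equation for $w=\mathcal S(v)$ with $w$ itself and using coercivity gives $\alpha\|\nabla w\|_{L^2(\Omega)}^2\le a(w,w) = \langle \frac{\tilde\nu}{(v^++\tau)^{\delta(x)}},w\rangle + \int_\Omega gw\,dx \le c_\tau\langle\tilde\nu,w\rangle + \|g\|_{L^2}\|w\|_{L^2}$, and then bounding $\langle\tilde\nu,w\rangle\le \|\tilde\nu\|_{W^{-1,2}(\Omega)}\|\nabla w\|_{L^2(\Omega)}$ (or using Lemma \ref{Bdd} if one prefers to phrase the pairing through quasi-continuous representatives) together with Poincaré; this yields $\|\nabla w\|_{L^2(\Omega)}\le R$ for a constant $R=R(\alpha,\tau,\delta,\tilde\nu,g,\Omega)$ independent of $v$. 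Hence $\mathcal S$ maps the ball $\mathcal{B}_R=\{v\in W^{1,2}_0(\Omega): \|\nabla v\|_{L^2}\le R,\ v\ge 0\}$ (or rather its image under the nonneg-part projection) into itself — nonnegativity of $w$ follows from the weak maximum principle for $\mathcal{M}$ since the right-hand side is a non-negative measure and $w=0$ outside $\Omega$, testing with $w^-$. Compactness of $\mathcal S: L^2(\Omega)\to L^2(\Omega)$ comes from the compact embedding $W^{1,2}_0(\Omega)\hookrightarrow\hookrightarrow L^2(\Omega)$ in Lemma \ref{emb}; continuity of $\mathcal S$ follows because if $v_n\to v$ in $L^2(\Omega)$ then (along a subsequence, pointwise a.e.) $(v_n^++\tau)^{-\delta(x)}\to (v^++\tau)^{-\delta(x)}$ pointwise and is dominated by $c_\tau$, so by dominated convergence the right-hand sides converge in $W^{-1,2}(\Omega)$ (at least weakly, testing against the difference and using the uniform bound), and linearity of Lax–Milgram transfers this to convergence of $\mathcal S(v_n)\to\mathcal S(v)$ in $W^{1,2}_0(\Omega)$ hence in $L^2(\Omega)$. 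Schauder then gives a fixed point $u=\mathcal S(u)\in W^{1,2}_0(\Omega)$, which is by construction a weak solution; the strict positivity $u>0$ in $\Omega$ follows from a strong maximum principle argument (or simply because $\mathcal{M}u\ge g\ge 0$ and $\mathcal{M}u$ is not identically zero since $\tilde\nu\neq 0$) — in fact the statement only asks for existence and uniqueness, and the sign conditions $u=0$ outside $\Omega$, $u>0$ in $\Omega$ are part of the formulation of \eqref{APE0}, so I would remark that $u\ge 0$ suffices for the weak formulation while positivity comes from the non-triviality of $\tilde\nu$.

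For uniqueness, I would argue directly: if $u_1,u_2\in W^{1,2}_0(\Omega)$ are two non-negative solutions, subtract the two weak formulations and test with $\phi=u_1-u_2$. The left-hand side gives $a(u_1-u_2,u_1-u_2)\ge \alpha\|\nabla(u_1-u_2)\|_{L^2(\Omega)}^2$ (dropping the non-negative nonlocal Gagliardo term, or keeping it), while the right-hand side is $\langle\tilde\nu,\,\big((u_1+\tau)^{-\delta(x)}-(u_2+\tau)^{-\delta(x)}\big)(u_1-u_2)\rangle$; since $t\mapsto (t+\tau)^{-\delta(x)}$ is strictly decreasing for each fixed $x$, the integrand $\big((u_1+\tau)^{-\delta(x)}-(u_2+\tau)^{-\delta(x)}\big)(u_1-u_2)\le 0$ pointwise, and $\tilde\nu\ge 0$, so the right-hand side is $\le 0$. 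Combining, $\alpha\|\nabla(u_1-u_2)\|_{L^2(\Omega)}^2\le 0$, hence $u_1=u_2$. The main obstacle I anticipate is the careful justification that the monotonicity term on the right can be integrated against $\tilde\nu$ — one must check that the functions $(u_i+\tau)^{-\delta(x)}(u_1-u_2)$ are admissible test functions / lie in the right space to be paired with $\tilde\nu\in W^{-1,2}(\Omega)$, which is where the boundedness $\tau>0$ and Lemma \ref{Bdd} (applied to $\mathrm{cap}_2$-quasi-continuous representatives, noting $\tilde\nu\in W^{-1,2}(\Omega)\subset M^2_0(\Omega)$) become essential; a density argument approximating $\phi$ by $C_c^\infty(\Omega)$ functions, together with the uniform $L^\infty$-in-$x$ bound on $(u_i+\tau)^{-\delta(x)}$, closes this gap. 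The role of $|\nabla\delta|\in L^N(\Omega)$, flagged in the Remark after Theorem \ref{Theorem1}, enters only if one wants to differentiate $(u+\tau)^{-\delta(x)}$ or use it as a test function with gradient controlled — here it ensures $\nabla\big((u+\tau)^{-\delta(x)}\big)$ makes sense in $L^2$ via the chain rule $\nabla(u+\tau)^{-\delta(x)} = -\delta(x)(u+\tau)^{-\delta(x)-1}\nabla u - (u+\tau)^{-\delta(x)}\log(u+\tau)\nabla\delta$, with the logarithmic term controlled by Lemma \ref{emb} and the integrability of $|\nabla\delta|$.
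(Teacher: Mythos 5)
Your approach (Schauder fixed point applied directly to the map $v\mapsto\mathcal{S}(v)$ with measure right-hand side) is genuinely different from the paper's. The paper does not tackle the measure $\tilde\nu\in W^{-1,2}(\Omega)$ head-on: it first solves $-\Delta w=\tilde\nu$, then introduces the elliptic regularization $-\frac{1}{n}\Delta w_n+w_n=w$ (following \cite{MC18}), which produces $\Delta w_n\in W^{1,2}_0(\Omega)\subset L^2(\Omega)$, so that the auxiliary problems $\mathcal{M}u_n=\frac{-\Delta w_n}{(|u_n|+\tau)^{\delta(x)}}+g$ have an honest $L^2$ right-hand side and can be solved via \cite[Lemma A.1]{BG}. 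The a priori bound is then extracted by integrating by parts, $\int_\Omega\frac{u_n(-\Delta w_n)}{(|u_n|+\tau)^{\delta(x)}}dx=\int_\Omega\nabla w_n\cdot\nabla\big(\frac{u_n}{(|u_n|+\tau)^{\delta(x)}}\big)dx$, and it is precisely in controlling $\nabla\big(\frac{u_n}{(|u_n|+\tau)^{\delta(x)}}\big)$ in $L^2$ that the hypothesis $|\nabla\delta|\in L^N(\Omega)$ is used in an essential way; the limit $n\to\infty$ is then taken in the weak formulation. Your plan bypasses the regularization and the $|\nabla\delta|$ estimate entirely, which is cleaner if it works.

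But there is a genuine gap in your continuity argument. You want $v_n\to v$ in $L^2(\Omega)$ to force $\frac{\tilde\nu}{(v_n^++\tau)^{\delta(x)}}\to\frac{\tilde\nu}{(v^++\tau)^{\delta(x)}}$ in $W^{-1,2}(\Omega)$ (weakly), via "pointwise a.e.\ convergence $+$ domination by $c_\tau$". The pointwise convergence you extract is only Lebesgue-a.e.\ along a subsequence, whereas a non-negative element $\tilde\nu\in W^{-1,2}(\Omega)$ may perfectly well carry a nontrivial singular part concentrated on a Lebesgue-null set (e.g.\ surface measure on a smooth hypersurface when $N\geq 3$). On that set Lebesgue-a.e.\ convergence says nothing, so dominated convergence does not apply to $\int_\Omega\big[(v_n^++\tau)^{-\delta(x)}-(v^++\tau)^{-\delta(x)}\big]\tilde\phi\,d\tilde\nu$, and the continuity of $\mathcal S$ fails as stated. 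This is not a cosmetic issue: the whole point of this lemma in the paper is to handle a $\tilde\nu$ that is potentially singular with respect to Lebesgue measure, which is exactly the regime your argument cannot reach. Relatedly, to even define $\frac{\tilde\nu}{(v^++\tau)^{\delta(x)}}$ for a singular $\tilde\nu$ you need a $\mathrm{cap}_2$-quasi-continuous representative of $v$, which a generic $v\in L^2(\Omega)$ does not possess; restricting the domain of $\mathcal{S}$ to a $W^{1,2}_0$-ball helps with well-definedness but does not repair the continuity step, since $L^2$-convergence of quasi-continuous representatives does not give convergence $\tilde\nu$-a.e. The paper's regularization of $\tilde\nu$ to $-\Delta w_n\in L^2(\Omega)$ is specifically what sidesteps all of this. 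A secondary gap: in the uniqueness argument you test with $\phi=u_1-u_2$, which is in $W^{1,2}_0(\Omega)$ but not necessarily in $L^\infty(\Omega)$, so the pairing $\langle\tilde\nu,\cdot\rangle$ is not a priori defined for the product you form; the paper avoids this by testing with the truncation $T^+_l(u_1-u_2)$ and sending $l\to\infty$ afterwards. Finally, your explanation of where $|\nabla\delta|\in L^N(\Omega)$ enters does not match the paper's use of it: it is not an optional refinement but the device that makes the a priori estimate \eqref{Ineq2} on $\nabla\big(u_n/(|u_n|+\tau)^{\delta(x)}\big)$ close, which is the heart of the paper's existence proof.
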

\begin{proof}
Since $\Tilde{\nu}\in W^{-1,2}(\Omega)\setminus\{0\}$, by the Lax-Milgram theorem {\cite[page 315]{LC}}, there exists a unique $w\in W^{1,2}_0(\Omega)\setminus\{0\}$ such that 
\begin{equation}
        -\Delta w=\Tilde{\nu} \text{ in }\Omega,
\end{equation}
Furthermore, by \cite[Lemma 2.1]{MC18}, for each $n\in\N$, there exists a unique $w_n\in W^{1,2}_0(\Omega)$ such that $w_n\neq w_m$ for $n\neq m$ satisfying
\begin{equation}
        -\frac{1}{n}\Delta w_n+w_n=w \text{ in }\Omega,
\end{equation}
with $w_n\to w$ strongly in $W^{1,2}_0(\Omega)$ and observe that $\De w_n\in W^{1,2}_0(\Omega)$. As in \cite[Lemma A.1]{BG}, there exists $u_n\in W^{1,2}_0(\Omega)$ satisfying the equation
\begin{align}\label{APE01}
    \mathcal{M}u_n=\frac{-\De w_n}{(|u_n|+\tau)^{\delta(x)}}+g \text{ in }\Omega.
\end{align}
In order to prove the boundedness of the sequence $\{u_n\}_{n\in\N}$, we incorporate $u_n$ as a test function in the weak formulation of \eqref{APE01} and utilize \eqref{lkernel}, \eqref{nkernel} to obtain
\begin{align}\label{Ine}
   \alpha \int_\Omega|\nabla u_n|^2\;dx&\leq \int_\Omega \frac{u_n(-\De w_n)}{(|u_n|+\tau)^{\delta(x)}}\;dx+\int_\Omega u_ng\;dx\nonumber\\
   &= \int_\Omega \nabla w_n\cdot\nabla\left(\frac{u_n}{(|u_n|+\tau)^{\delta(x)}}\right)\;dx+\int_\Omega u_ng\;dx\nonumber\\
    &\leq \|\nabla w_n\|_{L^2(\Omega)}\Big\|\nabla\left(\frac{u_n}{(|u_n|+\tau)^{\delta(x)}}\right)\Big\|_{L^2(\Omega)}+\|g\|_{L^2(\Omega)}\|u_n\|_{L^2(\Omega)}.
\end{align}
Using Poincar\'{e} inequality, H\"{o}lder inequality, Lemma \ref{emb} and the fact $|\nabla\delta|\in L^N(\Omega)$, we deduce
\begin{align}\label{Ineq2}
    \int_\Omega \Big|\nabla\left(\frac{u_n}{(|u_n|+\tau)^{\delta(x)}}\right)\Big|^2\;dx&\leq C\Big(\int_\Omega\Big|\frac{\nabla u_n}{(|u_n|+\tau)^{\delta(x)}}\Big|^2\;dx+\int_\Omega \Big|\frac{u_n\nabla\delta}{(|u_n|+\tau)^{\delta(x)}}\mathrm{log}(|u_n|+\tau)\Big|^2\;dx\nonumber\\
    &+\int_\Omega \Big|\frac{\delta u_n\nabla u_n}{(|u_n|+\tau)^{\delta(x)+1}}\Big|^2\;dx\Big)\nonumber\\
    &\leq C\Big(\int_\Omega |\nabla u_n|^2\;dx+\int_\Omega |u_n|^2|\nabla\delta|^2\;dx+\int_\Omega |\nabla u_n|^2\;dx\Big)\nonumber\\
    &\leq C\Big (\int_\Omega |\nabla u_n|^2\;dx+\Big(\int_\Omega |u_n|^{2^*}\;dx\Big)^\frac{2}{2^*}\Big(\int_\Omega |\nabla\delta|^N\;dx\Big)^\frac{2}{N}\nonumber\\&+\int_\Omega |\nabla u_n|^2\;dx\Big)
    \leq C\int_\Omega |\nabla u_n|^2\;dx,
\end{align}
where $C>0$ is a constant independent of $n$. In the second line, we used the fact
\begin{align}\label{LC}
   \Big|\frac{\mathrm{log}(|u_n|+\tau)}{(|u_n|+\tau)^{\delta(x)}}\Big|=\Big|\Big(\frac{1}{(|u_n|+\tau)^{\delta(x)-r}}\Big)\Big(\frac{\mathrm{log}(|u_n|+\tau)}{(|u_n|+\tau)^r}\Big)\Big|\leq C\text{ in }\Omega,
\end{align}
where $r:=\inf_{\overline{\Omega}}\delta>0$, and $C>0$ is independent of $n$. Here, we used the boundedness of the function $\frac{\mathrm{log}t}{t^r}$ in $[\tau,\infty)$. Utilizing inequality (\ref{Ineq2}) along with the boundedness of $\{w_n\}_{n\in\mb N}$ in $W^{1,2}_0(\Omega)$, (\ref{Ine}) yields
$$\int_\Omega |\nabla u_n|^2\;dx\leq C,$$
where $C>0$ is independent of $n$ but depends on $\tau$. Hence, there exists $u\in W^{1,2}_0(\Omega)$ such that $u_n\rightharpoonup u$ weakly in $W^{1,2}_0(\Omega)$ and $u_n\to u$ pointwise a.e. in $\Omega$. Assume that $\phi\in W^{1,2}_0({\Omega})\cap L^\infty(\Omega)$ is a compactly supported function, and we have 
\begin{align}
    \int_\Omega &A(x)\nabla u_n\cdot\nabla \phi \, dx+\int_{\mathbb{R}^N}\int_{\mathbb{R}^N}(u_n(x)-u_n(y))(\phi(x)-\phi(y))K(x,y)\, dx dy\nonumber\\
&=\int_\Omega\frac{-\De w_n \phi}{(|u_n|+\tau)^{\delta(x)}}\;dx+\int_\Omega \phi g\, dx.\nonumber\\
&=\int_\Omega \nabla w_n\cdot \nabla\left(\frac{\phi}{(|u_n|+\tau)^{\delta(x)}}\right)\;dx+\int_\Omega \phi g\, dx.
\end{align}
Taking $n\to\infty$ in both the sides, we obtain
\begin{align}\label{ges1}
      \int_\Omega &A(x)\nabla u\cdot\nabla \phi \, dx+\int_{\mathbb{R}^N}\int_{\mathbb{R}^N}(u(x)-u(y))(\phi(x)-\phi(y))K(x,y)\, dx dy\nonumber\\
     &=\lim_{n\to\infty}\int_\Omega \nabla w_n\cdot \nabla\left(\frac{\phi}{(|u_n|+\tau)^{\delta(x)}}\right)\;dx+\int_\Omega \phi g\, dx.
\end{align}
In order to pass the limit to the integral in the second last term in \eqref{ges1}, we observe
\begin{equation}\label{gesnew1}
\begin{split}
    \int_\Omega \nabla w_n\cdot\nabla\left (\frac{\phi}{(|u_n|+\tau)^{\delta(x)}}\right )\, dx&=\int_\Omega\frac{\nabla w_n\cdot\nabla \phi}{(|u_n|+\tau)^{\delta(x)}}\, dx-\int_\Omega\frac{\nabla w_n\cdot\nabla\delta(x)}{(|u_n|+\tau)^{\delta(x)}}\log{\Big(|u_n|+\tau\Big)}\phi\, dx\\
&-\int_\Omega\frac{\delta(x)\nabla w_n\cdot\nabla |u_n|}{(|u_n|+\tau)^{\delta(x)+1}}\phi\, dx.
\end{split}
\end{equation}
Since $w_n\to w$ strongly in $W^{1,2}_0(\Omega)$, and the sequences $\Big\{\frac{\nabla \phi}{(|u_n|+\tau)^{\delta(x)}}\Big\}_{n\in\N}$ converges weakly to the function $\frac{\nabla \phi}{(|u|+\tau)^{\delta(x)}}$ in $L^2(\Omega)^N$, we get
$$\lim_{n\to\infty}\int_\Omega\frac{\nabla w_n\cdot\nabla \phi}{(|u_n|+\tau)^{\delta(x)}}\, dx=\int_\Omega\frac{\nabla w\cdot\nabla \phi}{(|u|+\tau)^{\delta(x)}}\, dx.$$
Furthermore, using the facts (\ref{LC}), $|\nabla\delta|\in L^\infty_{\mathrm{loc}}(\Omega)$, we conclude that $$\frac{\nabla\delta(x)}{(|u_n|+\tau)^{\delta(x)}}\log{\big(|u_n|+\tau\big)}\phi\rightharpoonup\frac{\nabla\delta(x)}{(|u|+\tau)^{\delta(x)}}\log{\big(|u|+\tau\big)}\phi\text{ weakly in }L^2(\Omega)^N$$ and hence
$$\lim_{n\to\infty} \int_\Omega\frac{\nabla w_n\cdot\nabla\delta(x)}{(|u_n|+\tau)^{\delta(x)}}\log{\big(|u_n|+\tau\big)}\phi\, dx=\int_\Omega\frac{\nabla w\cdot\nabla\delta(x)}{(|u|+\tau)^{\delta(x)}}\log{\big(|u|+\tau\big)}\phi\, dx.$$
Moreover, since $\phi\in L^\infty(\Omega)$ and $|u_n|\rightharpoonup |u|$ weakly in $W^{1,2}_0(\Omega)$, by a similar argument, we arrive at
$$\lim_{n\to\infty}\int_\Omega\frac{\delta(x)\nabla w_n\cdot\nabla |u_n|}{(|u_n|+\tau)^{\delta(x)+1}}\phi\, dx=\int_\Omega\frac{\delta(x)\nabla w\cdot\nabla |u|}{(|u|+\tau)^{\delta(x)+1}}\phi\, dx.$$
Combining the above facts in \eqref{gesnew1}, we deduce
\begin{align}\label{ges}
    \lim_{n\to\infty}\int_\Omega \nabla w_n\cdot \nabla\left(\frac{\phi}{(|u_n|+\tau)^{\delta(x)}}\right)\;dx=\int_\Omega \nabla w\cdot \nabla\left(\frac{\phi}{(|u|+\tau)^{\delta(x)}}\right)\;dx.
\end{align}
Using (\ref{ges}) in (\ref{ges1}), we obtain
\begin{align}\label{Weak11}
    \int_\Omega &A(x)\nabla u\cdot\nabla \phi \, dx+\int_{\mathbb{R}^N}\int_{\mathbb{R}^N}(u(x)-u(y))(\phi(x)-\phi(y))K(x,y)\, dx dy\nonumber\\
     &=\int_\Omega \nabla w\cdot \nabla\left(\frac{\phi}{(|u|+\tau)^{\delta(x)}}\right)\;dx+\int_\Omega \phi g\, dx.
\end{align}
Now for a non-negative function $v\in W^{1,2}_0(\Omega)\cap L^\infty(\Omega)$, there exists as sequence of compactly supported function $\{v_n\}\subset W^{1,2}_0(\Omega)\cap L^\infty(\Omega)$ (see, \cite[Lemma A.1]{Hirano04}) such that $0\leq v_1\leq v_2\leq...\leq v_n\leq v_{n+1}\leq...$ and $v_n$ converges to $v$ strongly in $W^{1,2}_0(\Omega)$. Thus, putting $\phi=v_n$ in (\ref{Weak11}) and then taking $n\to\infty$, we deduce
\begin{align}\label{Weak2}
    \int_\Omega &A(x)\nabla u\cdot\nabla v \, dx+\int_{\mathbb{R}^N}\int_{\mathbb{R}^N}(u(x)-u(y))(v(x)-v(y))K(x,y)\, dx dy\nonumber\\
     &=\lim_{n\to\infty}\int_\Omega \nabla w\cdot \nabla\left(\frac{v_n}{(|u|+\tau)^{\delta(x)}}\right)\;dx+\int_\Omega v g\, dx.
\end{align}
In order to pass the limit to the integral, we use the equality
\begin{equation}\label{WS}
    \int_\Omega \nabla w\cdot \nabla\left(\frac{v_n}{(|u|+\tau)^{\delta(x)}}\right)\;dx=\int_\Omega \nabla w\cdot\left(\frac{\nabla v_n}{(|u|+\tau)^{\delta(x)}}\right)\;dx+\int_\Omega \nabla w\cdot \nabla\left(\frac{1}{(|u|+\tau)^{\delta(x)}}\right)v_n\;dx.
\end{equation}

Since $v_n\to v$ in $W^{1,2}_0(\Omega)$, we have
$$\lim_{n\to\infty}\int_\Omega \nabla w\cdot\left(\frac{\nabla v_n}{(|u|+\tau)^{\delta(x)}}\right)\;dx=\int_\Omega \nabla w\cdot\left(\frac{\nabla v}{(|u|+\tau)^{\delta(x)}}\right)\;dx.$$
Since $v\in L^\infty(\Omega)$, utilizing monotone convergence theorem along with generalized dominated convergence theorem, we get 
\begin{equation}\label{boundedness}
    \lim_{n\to\infty}\int_\Omega \nabla w\cdot \nabla\left(\frac{1}{(|u|+\tau)^{\delta(x)}}\right)v_n\;dx=\int_\Omega \nabla w\cdot \nabla\left(\frac{1}{(|u|+\tau)^{\delta(x)}}\right)v\;dx.
\end{equation}
Combining these two estimates along with \eqref{WS}, \eqref{Weak2} yields that for every non-negative $v\in W^{1,2}_0(\Omega)\cap L^\infty(\Omega)$ we have
\begin{align}\label{Weak}
    \int_\Omega &A(x)\nabla u\cdot\nabla v \, dx+\int_{\mathbb{R}^N}\int_{\mathbb{R}^N}(u(x)-u(y))(v(x)-v(y))K(x,y)\, dx dy\nonumber\\
     &=\int_\Omega \nabla w\cdot \nabla\left(\frac{v}{(|u|+\tau)^{\delta(x)}}\right)\;dx+\int_\Omega v g\, dx.
\end{align}
For every $v\in W^{1,2}_0(\Omega)\cap L^\infty(\Omega)$, we write $v=v^+-v^-$ and \eqref{Weak} hold for $v^+$ and $v^-$. Using linearity, we can conclude that \eqref{Weak} hold for $v$.\\
Now we shall prove that $u>0$ in $\Omega$. To this concern, we incorporate $v=-T^-_l(u)$ in \eqref{Weak} and use \eqref{lkernel}, \eqref{nkernel} to deduce
\begin{align}
    \alpha\int_\Omega |\nabla T^-_l(u)|^2\;dx\leq 0,\text{ for all }l>0.
\end{align}
Taking $l\to\infty$ and using Fatou's lemma, we get $u^-=0$ in $\Omega$. Consequently, $u\geq 0$ in $\Omega$. Thus, \cite[Lemma 4.6]{GKK} ensures $u>0$ in $\Omega$, which implies that $u$ is a weak solution of (\ref{APE0}).\\
\textbf{Uniqueness:} Let us assume that $u_1$ and $u_2$ are two such solutions of (\ref{APE01}). Thus,
$$\mathcal{M}(u_1-u_2)=\left(\frac{1}{(u_1+\tau)^{\delta(x)}}-\frac{1}{(u_2+\tau)^{\delta(x)}}\right)\Tilde{\nu} \text{ in }\Omega.$$
Incorporating $v= T^+_l(u_1-u_2)$ in the weak formulation of the above equation and utilizing (\ref{lkernel}) and (\ref{nkernel}), we obtain
\begin{align*}
    \alpha\int_\Omega |\nabla T^+_l(u_1-u_2)|^2\leq \int_\Omega\left(\frac{1}{(u_1+\tau)^{\delta(x)}}-\frac{1}{(u_2+\tau)^{\delta(x)}}\right)T^+_l(u_1-u_2)\;d\Tilde{\nu}\leq 0,
\end{align*}
which yields
$$\int_\Omega|T^+_l(u_1-u_2)|^{2^*}\;dx\leq 0.$$
By Fatou's lemma, we get
$$\int_\Omega|(u_1-u_2)^+|^{2^*}\;dx\leq 0,$$
which implies $u_1\leq u_2$ in $\Omega.$ Interchanging the roles of $u_1$ and $u_2$, we have $u_2\leq u_1$ in $\Omega$. Consequently, $u_1=u_2$ in $\Omega$. This completes the proof.
\end{proof}

We assume that $\{\nu_n\}_{n\in\mathbb{N}}\subset W^{-1,2}(\Omega)\setminus\{0\}$ is an increasing sequence of non-negative measures and $\{g_n\}_{n\in\mathbb{N}}\subset L^2(\Omega)$ is a sequence of non-negative functions in $\Omega$. Then, by Lemma \ref{Existence}, for each fixed $n\in\mathbb{N}$, there exists a unique $u_n\in W^{1,2}_0(\Omega)$ that solves the equation \ref{APE0} with $\tau=\frac{1}{n},\,\Tilde{\nu}=\nu_n,\,g=g_n$. That is, $u_n$ solves the following approximated problem:
\begin{align}\label{APE1}
    \begin{cases}
        \mathcal{M}u=\frac{\nu_n}{(u+\frac{1}{n})^{\delta(x)}}+g_n \text{ in }\Omega,\\
        u=0 \text{ in }\mathbb{R}^N\setminus\Omega \text{ and } u>0 \text{ in }\Omega,
    \end{cases}
\end{align}
in the sense of the integral identity (\ref{Weak}).

\begin{Lemma}\label{LD11}(Uniform positivity)
    Let for each $n\in\mathbb{N}$, $u_n\in W^{1,2}_0(\Omega)$ denotes the unique solution of the equation (\ref{APE1}). Then, for every $\omega\Subset\Omega$, there exists a constant $C(\omega)>0$ (independent of $n$) such that $u_n\geq C(\omega)$ in $\omega$ for all $n$.
\end{Lemma}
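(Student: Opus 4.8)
The plan is to produce a single, $n$-independent strictly positive lower barrier and to compare every $u_n$ against it from below, so that the resulting bound is automatically uniform in $n$.

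First I would construct the barrier. Applying Lemma \ref{Existence} with $\tau=1$, $\Tilde\nu=\nu_1$ and $g=0$ (legitimate since $\nu_1\in W^{-1,2}(\Omega)\setminus\{0\}$ is non-negative and $0\in L^2(\Omega)$) yields a unique $v_1\in W^{1,2}_0(\Omega)$ solving $\mathcal M v_1=\nu_1/(v_1+1)^{\delta(x)}$ in $\Omega$, $v_1=0$ in $\mathbb R^N\setminus\Omega$, in the sense of \eqref{Weak}. Exactly as in the proof of Lemma \ref{Existence}, testing with $v=-T^-_l(v_1)$ shows $v_1\geq 0$, and $v_1\not\equiv 0$ (otherwise $0=\mathcal M v_1=\nu_1\neq 0$). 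Hence $v_1$ is a non-negative, non-trivial supersolution of $\mathcal M v=0$, and by the strong maximum principle \cite[Lemma 4.6]{GKK} — the very one invoked in Lemma \ref{Existence}, in its quantitative (weak-Harnack) form — for each $\omega\Subset\Omega$ there is a constant $C(\omega)>0$ with $v_1\geq C(\omega)$ a.e. in $\omega$. Crucially $v_1$, and hence $C(\omega)$, does not depend on $n$.

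Next I would prove $u_n\geq v_1$ a.e. in $\Omega$ for every $n$ by comparison. Since $\nu_n\geq\nu_1\geq 0$, $g_n\geq 0$ and $\tfrac1n\leq 1$, one has $\mathcal M u_n=\nu_n/(u_n+\tfrac1n)^{\delta(x)}+g_n\geq \nu_1/(u_n+1)^{\delta(x)}$ as measures, so $u_n$ is a supersolution of the equation solved by $v_1$. Subtracting the weak formulations \eqref{Weak} of the two equations and testing with $v=T_k\big((v_1-u_n)^+\big)\in W^{1,2}_0(\Omega)\cap L^\infty(\Omega)$ (admissible by the last part of the proof of Lemma \ref{Existence}), the left-hand side is $\leq -\alpha\int_\Omega|\nabla T_k((v_1-u_n)^+)|^2\,dx$: the local part equals $-\int_{\{0<v_1-u_n<k\}}A\nabla(v_1-u_n)\cdot\nabla(v_1-u_n)\,dx\leq -\alpha\int_\Omega|\nabla T_k((v_1-u_n)^+)|^2\,dx$, and the nonlocal part is $\leq 0$ because $t\mapsto T_k(t^+)$ is non-decreasing. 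On the other hand, the right-hand side equals $\big\langle \nu_n/(u_n+\tfrac1n)^{\delta(x)}+g_n-\nu_1/(v_1+1)^{\delta(x)},\,v\big\rangle\geq 0$, because $v\geq 0$, $g_n\geq 0$, $\nu_n-\nu_1\geq 0$, and on $\{v>0\}\subseteq\{u_n<v_1\}$ one has $u_n+\tfrac1n<v_1+1$, hence $(u_n+\tfrac1n)^{-\delta(x)}\geq(v_1+1)^{-\delta(x)}$; here the $W^{-1,2}$–$W^{1,2}_0$ pairings are rewritten as integrals against $\nu_n$ and $\nu_1$ using that $v/(u_n+\tfrac1n)^{\delta(x)}$ and $v/(v_1+1)^{\delta(x)}$ lie in $W^{1,2}_0(\Omega)$ (this is exactly where the computation \eqref{Ineq2} and the hypothesis $|\nabla\delta|\in L^N_{\mathrm{loc}}(\Omega)$ are used). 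Combining the two bounds forces $\int_\Omega|\nabla T_k((v_1-u_n)^+)|^2\,dx=0$, so by Poincaré $T_k((v_1-u_n)^+)=0$ for every $k>0$; letting $k\to\infty$ gives $(v_1-u_n)^+=0$, i.e. $u_n\geq v_1$ a.e. in $\Omega$. Together with the previous step, $u_n\geq v_1\geq C(\omega)$ a.e. in $\omega$, uniformly in $n$, which is the claim.

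I expect the main obstacle to be the bookkeeping in the comparison step: verifying that $T_k\big((v_1-u_n)^+\big)$ and the singular quotients are admissible test functions in \eqref{Weak}, and that the dual pairings with $\nu_n$ and $\nu_1$ genuinely coincide with the corresponding measure integrals (precisely where the structural estimates behind Lemma \ref{Existence} and the condition $|\nabla\delta|\in L^N_{\mathrm{loc}}(\Omega)$ enter), together with citing the correct quantitative form of the strong maximum principle for $\mathcal M$ to upgrade ``$v_1>0$ in $\Omega$'' to a lower bound by a positive constant on each compact subset. Beyond these technical points, no genuinely new difficulty arises.
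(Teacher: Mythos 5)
Your proof is correct and takes essentially the same route as the paper: you build the same $n$-independent barrier (the solution of $(P_{1,\nu_1,0})$), compare $u_n$ against it via the test function $T_k((v_1-u_n)^+)=T_k^+(v_1-u_n)$ exploiting the monotonicity of the singular nonlinearity together with $\nu_n\geq\nu_1$ and $1/n\leq 1$, and then invoke \cite[Lemma 4.6]{GKK} for the interior positivity of the barrier. The only difference is cosmetic — you establish the positive lower bound for the barrier before the comparison step rather than after, and you spell out the admissibility of the test function and the sign of the nonlocal term in more detail than the paper does.
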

\begin{proof}
Let $n\in\mathbb{N}$. To obtain a uniform lower bound of $u_n$, we compare $u_n$ with the solution $w\in W^{1,2}_0(\Omega)$ of the equation \ref{APE0} with $\tau=1,\,\Tilde{\nu}=\nu_1,\,g=0$. That is $w$ satisfies the following equation:
\begin{align}\label{A77}
    \begin{cases}
         \mathcal{M}w=\frac{\nu_1}{(w+1)^{\delta(x)}}\text{ in }\Omega,\\
     w=0 \text{ in }\mathbb{R}^N\setminus\Omega \text{ and } w>0 \text{ in }\Omega.
    \end{cases}
\end{align}
By incorporating the test function $v=T^+_l(w-u_n)$ in the weak formulations of (\ref{APE1}),(\ref{A77}) and subtracting one from the other, we obtain
\begin{align*}
    \int_\Omega A(x)&\nabla(w-u_n)\cdot\nabla v\, dx+\int_{\mathbb{R}^N}\int_{\mathbb{R}^N}((w-u_n)(x)-(w-u_n)(y))(v(x)-v(y))K(x,y)dx\, dy\nonumber\\
    &=\int_\Omega\frac{v}{(w+1)^{\delta(x)}}\, d\nu_1
    -\int_\Omega\frac{v}{(u_n+\frac{1}{n})^{\delta(x)}}\, d\nu_n-\int_\Omega vg_n \, dx\nonumber\\
    &\leq \int_\Omega \left ( \frac{1}{(w+1)^{\delta(x)}}-\frac{1}{(u_n+\frac{1}{n})^{\delta(x)}}\right )T^+_l(w-u_n)\, d\nu_n\leq 0.
\end{align*}
In the last line, we used the fact $\nu_1\leq\nu_n$. Due to the property (\ref{lkernel}) and (\ref{nkernel}), we have 
\begin{align*}
    \alpha\int_\Omega|\nabla v|^2 dx+\underbrace{\Lambda^{-1}\int_{\mathbb{R}^N}\int_{\mathbb{R}^N}\frac{((w-u_n)(x)-(w-u_n)(y))(v(x)-v(y))}{|x-y|^{N+2s}}\, dxdy}_{\geq 0}\leq 0,
\end{align*}
which implies
\begin{align*}
    0\leq\int_{\Omega}|\nabla T^+_l(w-u_n)|^2\, dx\leq 0, \text{ for all }l>0.
\end{align*}
 Thus, $T^+_l(w-u_n)=0$ in $\Omega$ for all $l>0$, which implies $ (w-u_n)^+=0$ in $\Omega$. Hence, $u_n\geq w$ in $\Omega$. Since $n\in\mathbb{N}$ is arbitrary, $u_n\geq w$ for all $n\in\mathbb{N}$. {From \cite[Lemma 4.6]{GKK}, we can conclude that for every $\omega\Subset\Omega$, there exists a constant $C(\omega)>0$ such that $w\geq C(\omega)$ in $\omega$ and consequently, $u_n\geq C(\omega)$ in $\omega$ for all $n$.}
\end{proof}

\subsection{A priori estimates for {Theorem \ref{Theorem1}}}
The following uniform boundedness results will be useful for the proofs of Theorem \ref{Theorem1}.
\begin{Lemma}\label{LD2}(Uniform boundedness)
Assume that $\delta:\overline{\Omega}\to(0,\infty)$ is a continuous function such that $\delta$ is locally Lipschitz continuous in $\Omega$, $|\nabla\delta|\in L^N(\Omega)$, and $\delta$ satisfies $P_{\epsilon,\delta_*}$ for some $\;\delta_*\geq 1$ and for some $\epsilon>0$.

   Further, we suppose that the sequence of non-negative non-zero measures $\{\nu_n\}_{n\in\N}\subset W^{-1,2}(\Omega)\cap M^2_0(\Omega)$ is uniformly bounded in $\mathcal{M}(\Omega)$ (in total variation norm). Let $n\in\mathbb{N}$ and assume that the solution of (\ref{APE1}) obtained in Lemma \ref{Existence} is denoted by $u_n$. If the sequence of non-negative functions $\{g_n\}_{n\in\mathbb{N}}\subset L^2(\Omega)$ in $\Omega$ is uniformly bounded in $L^1(\Omega)$, then the following conclusions hold:
    \begin{enumerate}
        \item[(a)] If $\delta_*=1$, then the sequence $\{u_n\}_{n\in \mathbb{N}}$ is uniformly bounded in $W^{1,q}_0(\Omega)$ for every $1<q<\frac{N}{N-1}$.
        \item[(b)] If $\delta_*>1$, then the sequence $\{u_n\}_{n\in \mathbb{N}}$ is uniformly bounded in $W^{1,q}_{\mathrm{loc}}(\Omega)\cap L^1(\Omega)$ for every $1<q<\frac{N}{N-1}$. {Moreover, the sequences $\{G_k(u_n)\}_{n\in\mathbb{N}}$, $\{T_k(u_n)\}_{n\in\mathbb{N}}$ and $\Big\{T_k^\frac{\delta_*+1}{2}(u_n)\Big\}_{n\in\mathbb{N}}$ are uniformly bounded in $W^{1,q}_0(\Omega)$, $W^{1,2}_{\mathrm{loc}}(\Omega)$ and $W^{1,2}_0(\Omega)$, respectively for every fixed $k>0$ and $1<q<\frac{N}{N-1}$.}
    \end{enumerate}
\end{Lemma}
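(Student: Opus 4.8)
The plan is to obtain every uniform bound claimed in Lemma~\ref{LD2} by testing the weak formulation \eqref{Weak} of the approximate problems \eqref{APE1} against suitable non-decreasing Lipschitz functions of $u_n$ and then running the classical Boccardo--Gallou\"et/Marcinkiewicz argument for the measure term. Fix once and for all the compact set $K_\epsilon:=\{x\in\Omega:\mathrm{dist}(x,\partial\Omega)\ge\epsilon\}=\Omega\setminus\Omega_\epsilon$, so that $\Omega=K_\epsilon\cup\Omega_\epsilon$ with $K_\epsilon\Subset\Omega$; by Lemma~\ref{LD11} there is $c_\epsilon>0$ with $u_n\ge c_\epsilon$ on $K_\epsilon$ for all $n$. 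Write $M:=\sup_n\nu_n(\Omega)<\infty$, $L:=\sup_n\|g_n\|_{L^1(\Omega)}<\infty$, $\|\delta\|_\infty:=\max_{\overline\Omega}\delta$. For any non-decreasing Lipschitz $\Psi$ with $\Psi(0)=0$, the function $\Psi(u_n)\in W^{1,2}_0(\Omega)\cap L^\infty(\Omega)$ is admissible in \eqref{Weak}; inserting it, the nonlocal term is non-negative by monotonicity of $\Psi$, ellipticity \eqref{lkernel} gives $\int_\Omega A\nabla u_n\cdot\nabla\Psi(u_n)\ge\alpha\int_\Omega\Psi'(u_n)|\nabla u_n|^2$, and the singular term on the right equals $\int_\Omega\frac{\Psi(u_n)}{(u_n+1/n)^{\delta(x)}}\,d\nu_n$ (integrate by parts in $\int_\Omega\nabla w_n\cdot\nabla(\Psi(u_n)(u_n+\tfrac1n)^{-\delta(x)})$, using $|\nabla\delta|\in L^N(\Omega)$ and the bound \eqref{LC} to see that $\Psi(u_n)(u_n+\tfrac1n)^{-\delta(x)}\in W^{1,2}_0(\Omega)$, exactly as in Lemma~\ref{Existence}). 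I then estimate this last integral by splitting it over $K_\epsilon$ and $\Omega_\epsilon$ and bounding the integrand $\nu_n$-a.e.\ on each piece by a constant \emph{independent of $n$}, so that each piece is controlled by that constant times $\nu_n(\Omega)\le M$.

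For part (a) ($\delta_*=1$, hence $\delta\le1$ on $\Omega_\epsilon$) I would take $\Psi=T_k$. The left side of \eqref{Weak} is then $\ge\alpha\int_\Omega|\nabla T_k(u_n)|^2$; on the right, $\int_\Omega T_k(u_n)g_n\le kL$, while on $K_\epsilon$ one has $\frac{T_k(u_n)}{(u_n+1/n)^{\delta(x)}}\le k\,c_\epsilon^{-\delta(x)}\le k\max(1,c_\epsilon^{-\|\delta\|_\infty})$, and on $\Omega_\epsilon$ the elementary inequality $\frac{\min(t,k)}{t^{\delta(x)}}\le\max(1,k)$, valid for $0<\delta(x)\le1$ and $t>0$, applied at $t=u_n+\tfrac1n$, gives $\frac{T_k(u_n)}{(u_n+1/n)^{\delta(x)}}\le\max(1,k)$. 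Hence $\int_\Omega|\nabla T_k(u_n)|^2\le C(k+1)$ with $C$ independent of $n$, and the standard Marcinkiewicz iteration — using the Sobolev inequality for $T_k(u_n)\in W^{1,2}_0(\Omega)$ to get $|\{u_n>k\}|\lesssim (k+1)^{2^*/2}k^{-2^*}$, then optimising over $k$ in $|\{|\nabla u_n|>\lambda\}|\le\lambda^{-2}\int_{\{u_n\le k\}}|\nabla u_n|^2+|\{u_n>k\}|$ — yields $|\nabla u_n|\in M^{N/(N-1)}(\Omega)$ uniformly, whence by \eqref{maremb} $\{u_n\}$ is bounded in $W^{1,q}_0(\Omega)$ for every $1<q<\frac{N}{N-1}$.

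For part (b) ($\delta_*>1$, hence $\delta\le\delta_*$ on $\Omega_\epsilon$) I would proceed in three steps. (i) \emph{Boundary power estimate.} Take $\Psi=\psi_n$ with $\psi_n(t)=(T_k(t)+\tfrac1n)^{\delta_*}-(\tfrac1n)^{\delta_*}$; then the left side of \eqref{Weak} is $\ge\alpha\delta_*\int_\Omega(T_k(u_n)+\tfrac1n)^{\delta_*-1}|\nabla T_k(u_n)|^2\ge\frac{4\alpha\delta_*}{(\delta_*+1)^2}\int_\Omega|\nabla T_k^{(\delta_*+1)/2}(u_n)|^2$, using $T_k(u_n)^{\delta_*-1}\le(T_k(u_n)+\tfrac1n)^{\delta_*-1}$. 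On the right, $\int_\Omega\psi_n(u_n)g_n\le(k+1)^{\delta_*}L$; the singular term on $K_\epsilon$ is tamed by $u_n\ge c_\epsilon$, and on $\Omega_\epsilon$ one checks, distinguishing $u_n\le k$ (where $\frac{(T_k(u_n)+1/n)^{\delta_*}}{(u_n+1/n)^{\delta(x)}}=(u_n+\tfrac1n)^{\delta_*-\delta(x)}\le(k+1)^{\delta_*}$, since $0\le\delta_*-\delta(x)\le\delta_*$) from $u_n>k$ (where it is $\le(k+1)^{\delta_*}k^{-\delta(x)}$), that $\frac{(T_k(u_n)+1/n)^{\delta_*}}{(u_n+1/n)^{\delta(x)}}\le C(k)$ uniformly in $n$. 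Thus $\int_\Omega|\nabla T_k^{(\delta_*+1)/2}(u_n)|^2\le C(k)$ and Poincar\'e gives $\{T_k^{(\delta_*+1)/2}(u_n)\}$ bounded in $W^{1,2}_0(\Omega)$. (ii) \emph{Local $W^{1,2}$ bound for $T_k(u_n)$.} On $\omega\Subset\Omega$ we have $u_n\ge C(\omega)>0$, so $T_k(u_n)\ge\min(C(\omega),k)>0$ there and $|\nabla T_k(u_n)|=\frac{2}{\delta_*+1}T_k(u_n)^{(1-\delta_*)/2}|\nabla T_k^{(\delta_*+1)/2}(u_n)|\le C(\omega,k)|\nabla T_k^{(\delta_*+1)/2}(u_n)|$ on $\omega$; together with $|T_k(u_n)|\le k$ and step (i), this bounds $\{T_k(u_n)\}$ in $W^{1,2}(\omega)$. (iii) \emph{Global $W^{1,q}$ bound for $G_k(u_n)$ and the $L^1$ bound.} Take $\Psi=T_h\circ G_k$ for $h>0$: the left side is $\ge\alpha\int_\Omega|\nabla T_h(G_k(u_n))|^2$, and since $T_h(G_k(u_n))$ is supported in $\{u_n>k\}$, where $u_n+\tfrac1n>k$, the right side is $\le hL+h\max(1,k^{-\|\delta\|_\infty})M$; hence $\int_\Omega|\nabla T_h(G_k(u_n))|^2\le C(k)h$ for all $h>0$, and the Marcinkiewicz iteration of part (a) gives $\{G_k(u_n)\}$ bounded in $W^{1,q}_0(\Omega)$ for every $1<q<\frac{N}{N-1}$ and in $M^{N/(N-2)}(\Omega)\hookrightarrow L^1(\Omega)$. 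Taking $k=1$ and writing $u_n=T_1(u_n)+G_1(u_n)$, steps (ii)--(iii) give $\{u_n\}$ bounded in $W^{1,q}_{\mathrm{loc}}(\Omega)\cap L^1(\Omega)$, which, together with (i)--(iii), are exactly the asserted conclusions.

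The main obstacle is the singular term on $\Omega_\epsilon$, where the uniform lower bound of Lemma~\ref{LD11} is \emph{not} available, so one cannot simply dominate $(u_n+\tfrac1n)^{-\delta(x)}$ by a bound free of $n$. The role of the hypothesis $P_{\epsilon,\delta_*}$ (that is, $\delta\le\delta_*$ on $\Omega_\epsilon$) is precisely to make the quotient $\frac{(T_k(u_n)+1/n)^{\delta_*}}{(u_n+1/n)^{\delta(x)}}$ bounded uniformly in $n$ on $\Omega_\epsilon$, which is what forces the choice of the exact power $\delta_*$ in the test function $\psi_n$ of step (b)(i). The remaining ingredients — admissibility of the test functions (the $W^{1,2}_0$ membership of $\Psi(u_n)(u_n+\tfrac1n)^{-\delta(x)}$, where $|\nabla\delta|\in L^N(\Omega)$ and \eqref{LC} enter) and the Marcinkiewicz truncation estimates — are routine.
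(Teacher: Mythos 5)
Your proposal is correct and follows essentially the same route as the paper: test the weak formulation with $T_l(u_n)$, $T_l(G_k(u_n))$ and a $\delta_*$-power of $T_k(u_n)$, control the singular term by an $L^\infty$ bound of the integrand against the uniformly bounded total variation of $\nu_n$ (splitting over $\Omega\setminus\Omega_\epsilon$, where Lemma \ref{LD11} applies, and $\Omega_\epsilon$, where $P_{\epsilon,\delta_*}$ applies), and then run the Boccardo--Gallou\"et/Marcinkiewicz iteration. Your use of $(T_k(u_n)+\tfrac1n)^{\delta_*}-(\tfrac1n)^{\delta_*}$ in place of the paper's $T_k^{\delta_*}(u_n)$ and your slightly more explicit justification of the duality pairing (via the admissibility of $\Psi(u_n)(u_n+\tfrac1n)^{-\delta(x)}$ in $W^{1,2}_0(\Omega)$, which is where Lemma \ref{Bdd} enters in the paper) are only cosmetic variations.
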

\begin{proof}
\begin{enumerate}
\item[$(a)$] We assume $\delta_*=1,$ which implies $0<\delta(x)\leq 1$ in $\Omega_\epsilon$. We shall prove that $\{u_n\}_{n\in\mathbb{N}}$ is uniformly bounded in $W^{1,q}_0(\Omega)$ for every $1<q<\frac{N}{N-1}$. Due to the fact (\ref{maremb}), it is sufficient to prove that $\{u_n\}_{n\in\mathbb{N}}$ is bounded in {$M^\frac{N}{N-1}(\Omega).$} Since $\Omega$ is bounded, it is enough to estimate the measure, $|x\in\Omega:\{|\nabla u_n|\geq t\}|$ for all $t\geq 1$. For any $l,t\geq 1$, we observe that
\begin{align}\label{KZ}
    |\{ x\in\Omega: |\nabla u_n|\geq t\}|\leq \underbrace{|\{x\in\Omega: |\nabla u_n|\geq t, u_n\leq l\}|}_{=I_2}+\underbrace{|\{x\in\Omega: |\nabla u_n|\geq t, u_n\geq l\}|}_{=I_1}.
\end{align}
We shall estimate $I_1$ and $I_2$. In order to estimate $I_1$, we put $\phi=T_l(u_n)$ as a test function in the weak formulation of the equation $(\ref{APE1})$ and use (\ref{lkernel}) and (\ref{nkernel}) to deduce
\begin{align}\label{AT9}
    \alpha\int_\Omega |\nabla T_l(u_n)|^2\, dx+&\Lambda^{-1}\underbrace{\int_{\mathbb{R}^N}\int_{\mathbb{R}^N} \frac{(u_n(x)-u_n(y))(T_l(u_n(x))-T_l(u_n(y)))}{|x-y|^{N+2s}}\, dxdy}_{\geq 0}\nonumber\\
    &\leq \int_\Omega\frac{T_l(u_n)}{(u_n+\frac{1}{n})^{\delta(x)}}\, d\nu_n+\int_\Omega T_l(u_n)g_n\, dx\nonumber\\
    &\leq \underbrace{\int_\Omega\frac{T_l(u_n)}{(u_n+\frac{1}{n})^{\delta(x)}}\, d\nu_n}_{=J_1}+l||g_n||_{L^1(\Omega)}.
\end{align}
We observe that, since $T_l(s)$ is an increasing function in $s$, the nonlocal integral above becomes non-negative. Using Lemma \ref{Bdd} and boundedness of $\{\nu_n\}_{n\in\N}$ in $\mathcal{M}(\Omega)$, we obtain 
\begin{align}
J_1&=\int_{\Omega}\frac{T_l(u_n)}{(u_n+\frac{1}{n})^{\delta(x)}}\, d\nu_n\leq \Big\|\frac{T_l(u_n)}{(u_n+\frac{1}{n})^{\delta(x)}}\Big\|_{L^\infty(\Omega)}|\nu_n(\Omega)|\leq C\Big\|\frac{T_l(u_n)}{(u_n+\frac{1}{n})^{\delta(x)}}\Big\|_{L^\infty(\Omega)} .
\end{align}
By using the property $P_{\epsilon,\delta_*}$ and Lemma \ref{LD11}, we deduce
\begin{align*}
    \Big|\frac{T_l(u_n)}{(u_n+\frac{1}{n})^{\delta(x)}}\Big|&=\Big|\frac{T_l(u_n)}{(u_n+\frac{1}{n})^{\delta(x)}}\chi_{\Omega\cap\Omega_\epsilon^c}(x)\Big|+\Big|\frac{T_l(u_n)}{(u_n+\frac{1}{n})^{\delta(x)}}\chi_{\Omega_\epsilon}(x)\Big|\\
    &\leq Cl+\Big|\frac{T_l(u_n)}{(u_n+\frac{1}{n})^{\delta(x)}}\chi_{\{x\in\Omega_\epsilon:u_n(x)>1\}}(x)\Big|+\Big|\frac{T_l(u_n)}{(u_n+\frac{1}{n})^{\delta(x)}}\chi_{\{x\in\Omega_\epsilon:u_n(x)\leq 1\}}(x)\Big|\\
    &\leq Cl+Cl+\Big|\frac{T_l(u_n)}{(T_l(u_n)+\frac{1}{n})^{\delta(x)}}\chi_{\{x\in\Omega_\epsilon:u_n(x)\leq 1\}}(x)\Big|\\
    &\leq 2Cl+T_l^{1-\delta(x)}(u_n)\chi_{\{x\in\Omega_\epsilon:u_n(x)\leq1\}}(x)\\
    &\leq 2Cl+l\leq Cl,
\end{align*}
which yields
\begin{align}\label{ATheorem10}
    J_1\leq Cl,
\end{align}
where $C>0$ is a constant independent of $n$. It follows from (\ref{AT9}) and (\ref{ATheorem10}) along with the uniform $L^1$ bound of $g_n$ that 
\begin{align}\label{ATheorem12}
    \int_\Omega |\nabla T_l(u_n)|^2\, dx\leq Cl,
\end{align}
where $C>0$ is a constant independent of $n$. By Lemma \ref{emb}, one has
\begin{align*}
    \left (\int_{\{x\in\Omega: u_n(x)\geq l\}} |T_l{(u_n)}|^{2^*}\, dx\right )^\frac{2}{2^*}\leq Cl,
\end{align*}
where {$2^*=\frac{2N}{N-2}$}. Here, $C>0$ is a constant independent of $n$. This yields 
\begin{align*}
    |\{x\in\Omega: u_n(x)\geq l\}|^\frac{2}{2^*}\leq \frac{C}{l}.
\end{align*}
Thus, there exists a constant $C>0$ is a constant independent of $n$ such that
\begin{align}\label{ST2}
   I_1\leq \left |\{x\in\Omega: u_n(x)\geq l\}\right |\leq \frac{C}{l^{\frac{N}{N-2}}}, \text{ for all }l\geq 1.
\end{align}
Now we estimate $I_2$. To this concern, we deduce
\begin{align}\label{ATheorem14}
    I_2&=|\{x\in\Omega: |\nabla u_n|\geq t, u_n\leq l\}|\leq \frac{1}{t^2}\int_{\{x\in\Omega:u_n\leq l\}}|\nabla u_n|^2\, dx\nonumber\\
    &\leq \frac{1}{t^2}\int_\Omega |\nabla T_l(u_n)|^2\, dx\leq \frac{Cl}{t^2},
\end{align}
where $C>0$ is a constant independent of $n$. The rest of the proof follows by proceeding along the lines of the proof of \cite[Lemma A.2, page 23]{BG}.

\item[$(b)$] We suppose that $\delta_*>1$. We shall prove that for every fixed $k>0$, the sequence $\{G_k(u_n)\}_{n\in\mathbb{N}}$ is uniformly bounded in $W^{1,q}_0(\Omega)$ for every $1<q<\frac{N}{N-1}.$

To this end, similar to part $(a)$ above, we estimate the measure $|\{x\in\Omega:|\nabla G_k(u_n)|\geq t\}|$ for all $t\geq 1.$ Let $l>0$. Then, we observe that 
\begin{align}\label{ineq1}
    &|\{x\in\Omega:\nabla G_k(u_n)|\geq t\}|\nonumber\\
    &\leq \underbrace{|\{x\in\Omega:|\nabla G_k(u_n)|\geq t, G_k(u_n)\leq l\}}_{I_2}|+\underbrace{|\{x\in\Omega:|\nabla G_k(u_n)|\geq t, G_k(u_n)\geq l\}|}_{I_1}.
\end{align}
We estimate $I_1$ and $I_2$ using the same approach as in part $(a)$ above. By taking the test function $\phi=T_l(G_k(u_n))$ in the weak formulation of (\ref{APE1}) and applying Lemma \ref{Bdd} along with uniform bound of $\{\nu_n\}$ in $\mathcal{M}(\Omega)$, we obtain
\begin{align}\label{cl}
    \int_\Omega A(x)\nabla u_n\cdot \nabla T_l(G_k(u_n)) \, dx&+\underbrace{\int_{\mathbb{R}^N}\int_{\mathbb{R}^N}(u_n(x)-u_n(y))(\phi(x)-\phi(y)K(x,y)\, dxdy}_{\geq 0}\nonumber\\
    &=\int_\Omega \frac{T_l(G_k(u_n))}{(u_n+\frac{1}{n})^{\delta(x)}}\, d\nu_n+\int_\Omega g_nT_l(G_k(u_n))\, dx\nonumber\\
    &\leq l\left\lVert \frac{1}{k^{\delta(x)}}\right\rVert_{L^\infty(\Omega)}\nu_n(\Omega)+l\int_\Omega g_n\leq Cl,
\end{align}
where the constant $C>0$ is independent of $n.$ We observe that, since $T_l(s)$ is an increasing function in $s$, the nonlocal integral above becomes non-negative. We have used the fact that $u_n\geq k$ in $\mathrm{supp}\,T_l(G_k(u_n))$ to obtain the first inequality in \eqref{cl} above. Then the above inequality \eqref{cl} together with (\ref{lkernel}) leads to
\begin{align}\label{W4}
  \int_\Omega |\nabla T_l(G_k(u_n))|^2\, dx\leq Cl,
\end{align}
where $C>0$ is a constant independent of $n.$ The rest of the proof follows similar to that of claim 1 in \cite[Lemma A.2, page 24]{BG}.\\
 We are left with the proof of the facts that the sequences $\{T_k(u_n)\}_{n\in\mathbb{N}}$ and $\Big\{T_k(u_n)^\frac{\delta_*+1}{2}\Big\}_{n\in\mathbb{N}}$ are uniformly bounded in $W^{1,2}_{\mathrm{loc}}(\Omega)$ and $W^{1,2}_{0}(\Omega)$, respectively for every fixed $k>0$.

To this concern, we put $\phi=T^{\delta_*}_k(u_n)$ into the weak formulation of (\ref{APE1}) and use of the properties (\ref{lkernel}) and (\ref{nkernel}) to deduce
\begin{align}\label{W2}
    \frac{4\alpha\delta_*}{(\delta_*+1)^2}\int_\Omega \Big|\nabla T^\frac{\delta_*+1}{2}_k(u_n)\Big|^2\, dx&+\Lambda^{-1}\underbrace{\int_{\mathbb{R}^N}\int_{\mathbb{R}^N} \frac{(u_n(x)-u_n(y))(\phi(x)-\phi(y))}{|x-y|^{N+2s}}\, dxdy}_{\geq 0}\nonumber\\
    &\leq \int_\Omega \frac{T^{\delta_*}_k(u_n)}{(u_n+\frac{1}{n})^{\delta(x)}}\, d\nu_n+\int_\Omega T^{\delta_*}_k(u_n) g_n\, dx.
\end{align}
Using the uniform $L^1$ bound of $g_n$, and uniform bound of the total variation of $\{\nu_n\}$ along with Lemma \ref{Bdd}, (\ref{W2}) yields
\begin{align}\label{TB}
    \int_\Omega \Big|\nabla T^\frac{\delta_*+1}{2}_k(u_n)\Big|^2\, dx&\leq C\Big(  \|T^{\delta_*-\delta(x)}_k(u_n)\|_{L^\infty(\Omega)}|\nu_n(\Omega)|+k^{\delta_*}\int_\Omega g_n\, dx\Big)\nonumber\\
    &\leq C\Big(\|k^{\delta_*-\delta(x)}\|_{L^\infty(\Omega)}+k^{\delta_*}\Big)\leq C
\end{align}
where $C>0$ is a constant independent of $n$. Hence, the sequence $\Big\{T_k(u_n)^\frac{\delta_*+1}{2}\Big\}_{n\in\mathbb{N}}$ is uniformly bounded in $W^{1,2}_{0}(\Omega)$ for every fixed $k>0$.The rest of the proof follows similarly to \cite[Lemma A.2, page 26]{BG}.
\end{enumerate}
\end{proof}

\subsection{A priori estimates for Theorem \ref{Theorem2}}
\begin{Lemma}\label{BL2}
Assume that the function $\delta:\overline{\Omega}\to (0,\infty)$ is a constant function. Let $n\in\mathbb{N}$ and define $g_n(x)=T_n(g(x))=\min\{g(x),n\}$ {in \eqref{APE1}}, where $g\in L^{\frac{N(\delta+1)}{N+2\delta}}(\Omega)$ is a non-negative function in $\Omega$. {Suppose that the solution of \eqref{APE1} obtained in Lemma \ref{Existence} is denoted by $u_n$.} If the sequence of non-negative non-zero measures $\{\nu_n\}_{n\in\N}\subset W^{-1,2}(\Omega)\cap M^2_0(\Omega)$ is uniformly bounded in $\mathcal{M}(\Omega)$ (in total variation norm), then the following conclusions hold:
\begin{enumerate}
    \item[$(a)$] If $\delta=1$, then $\{u_n\}_{n\in\mathbb{N}}$ is uniformly bounded in $W^{1,2}_0(\Omega)$.
    \item[$(b)$] If $0<\delta<1$, then $\{u_n\}_{n\in\mathbb{N}}$ is uniformly bounded in $W^{1,q}_0(\Omega)$, where $q=\frac{N(\delta+1)}{N+\delta-1}$.
     \item[$(c)$] If $\delta>1$, then $\{u_n\}_{n\in\mathbb{N}}$ is uniformly bounded in $W^{1,2}_{\mathrm{loc}}(\Omega)$. Moreover, $\{u^\frac{\delta+1}{2}_n\}_{n\in\mathbb{N}}$ is uniformly bounded in $W^{1,2}_0(\Omega).$
\end{enumerate}
\end{Lemma}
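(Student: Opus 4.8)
The plan is to establish all three uniform bounds by testing the weak formulation \eqref{Weak} of the approximate problem \eqref{APE1} with suitable \emph{bounded} powers of truncations of $u_n$, and in each case to exploit three structural facts: the nonlocal term in \eqref{Weak} is automatically non-negative whenever the test function is a non-decreasing function of $u_n$, hence can simply be dropped; the singular term is controlled by $\nu_n(\Omega)$, which is uniformly bounded by hypothesis, via Lemma \ref{Bdd}; and the term carrying $g_n=T_n(g)$ is, after the chain rule has produced the gradient of a single power of $u_n$, controlled by Sobolev's inequality (Lemma \ref{emb}) together with a H\"older inequality whose conjugate exponent is precisely $N(\delta+1)/(N+2\delta)$ --- this is the reason $g$ is assumed in that Lebesgue space. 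For the common setup: by Lemma \ref{Existence} each $u_n\in W^{1,2}_0(\Omega)$ is well defined with $u_n\ge 0$; by Lemma \ref{LD11}, for every $\omega\Subset\Omega$ there is $c_\omega>0$ with $u_n\ge c_\omega$ in $\omega$ for all $n$; and $\|g_n\|_{L^1(\Omega)}\le\|g\|_{L^1(\Omega)}\le C$ since $N(\delta+1)/(N+2\delta)\ge 1$ (because $N>2$). Since $u_n=0$ on $\mathbb{R}^N\setminus\Omega$, any $\Psi(u_n)$ with $\Psi$ Lipschitz on $[0,\infty)$, $\Psi(0)=0$, and bounded is admissible in \eqref{Weak}.

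For parts (a) and (b), i.e. $0<\delta\le 1$, fix $k>0$ and test \eqref{Weak} with $\phi_{n,k}=(T_k(u_n)+\tfrac1n)^\delta-(\tfrac1n)^\delta$, which is non-negative, bounded, non-decreasing in $u_n$, and in $W^{1,2}_0(\Omega)$ (the shift by $\tfrac1n$ is precisely what keeps the chain-rule factor $(T_k(u_n)+\tfrac1n)^{\delta-1}$ bounded). Dropping the nonlocal term, using \eqref{lkernel} and the identity $(T_k(u_n)+\tfrac1n)^{\delta-1}|\nabla T_k(u_n)|^2=\tfrac{4}{(\delta+1)^2}|\nabla\psi_{n,k}|^2$ with $\psi_{n,k}=(T_k(u_n)+\tfrac1n)^{(\delta+1)/2}-(\tfrac1n)^{(\delta+1)/2}\in W^{1,2}_0(\Omega)$, the left-hand side is bounded below by $\tfrac{4\alpha\delta}{(\delta+1)^2}\|\nabla\psi_{n,k}\|_{L^2(\Omega)}^2$. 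On the right, $0\le\phi_{n,k}/(u_n+\tfrac1n)^\delta\le 1$ gives a bound $\le C$ for the singular term by Lemma \ref{Bdd}, while subadditivity of $t\mapsto t^{2\delta/(\delta+1)}$ (valid here since $\tfrac{2\delta}{\delta+1}\le 1$), $\|g_n\|_{L^1}\le C$, Lemma \ref{emb} and the conjugate-exponent H\"older inequality bound the $g_n$-term by $C+C\|\nabla\psi_{n,k}\|_{L^2(\Omega)}^{2\delta/(\delta+1)}\|g\|_{L^{N(\delta+1)/(N+2\delta)}(\Omega)}$. Since $\tfrac{2\delta}{\delta+1}<2$, Young's inequality yields $\|\nabla\psi_{n,k}\|_{L^2(\Omega)}\le C$ uniformly in $n,k$; letting $k\to\infty$ and using Fatou, $\psi_n=(u_n+\tfrac1n)^{(\delta+1)/2}-(\tfrac1n)^{(\delta+1)/2}$ is bounded in $W^{1,2}_0(\Omega)$, hence $\{u_n\}$ is bounded in $L^{(\delta+1)N/(N-2)}(\Omega)$. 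For $\delta=1$ one has $\psi_n=u_n$, which is part (a). For $0<\delta<1$, writing $\nabla u_n=\tfrac{2}{\delta+1}(u_n+\tfrac1n)^{(1-\delta)/2}\nabla\psi_n$ and applying H\"older with exponents $\tfrac{2}{q}$ and $\tfrac{2}{2-q}$, one checks that $\tfrac{q(1-\delta)}{2-q}=\tfrac{(\delta+1)N}{N-2}$ exactly when $q=\tfrac{N(\delta+1)}{N+\delta-1}$, whence $\|\nabla u_n\|_{L^q(\Omega)}\le C$; combined with the $L^q$ bound and the equivalence of the $W^{1,q}_0(\Omega)$ norm with $\|\nabla\cdot\|_{L^q(\Omega)}$, this is part (b).

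For part (c), i.e. $\delta>1$, the argument is identical with $\phi_{n,k}=(T_k(u_n))^\delta$ (bounded by $k^\delta$, non-decreasing in $u_n$, in $W^{1,2}_0(\Omega)$; no shift is needed since $\delta-1>0$). Dropping the nonlocal term and using \eqref{lkernel} and $(T_k(u_n))^{\delta-1}|\nabla T_k(u_n)|^2=\tfrac{4}{(\delta+1)^2}|\nabla (T_k(u_n))^{(\delta+1)/2}|^2$, the left side is $\ge\tfrac{4\alpha\delta}{(\delta+1)^2}\|\nabla (T_k(u_n))^{(\delta+1)/2}\|_{L^2(\Omega)}^2$; on the right the singular term is $\le\nu_n(\Omega)\le C$ by Lemma \ref{Bdd} (since $(T_k(u_n))^\delta\le(u_n+\tfrac1n)^\delta$), and $\int_\Omega(T_k(u_n))^\delta g_n\,dx=\int_\Omega\big((T_k(u_n))^{(\delta+1)/2}\big)^{2\delta/(\delta+1)}g_n\,dx\le C\|\nabla (T_k(u_n))^{(\delta+1)/2}\|_{L^2(\Omega)}^{2\delta/(\delta+1)}$ by Lemma \ref{emb} and the same conjugate-exponent H\"older inequality. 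Since $\tfrac{2\delta}{\delta+1}<2$, Young's inequality followed by $k\to\infty$ (Fatou / weak lower semicontinuity) gives that $\{u_n^{(\delta+1)/2}\}$ is uniformly bounded in $W^{1,2}_0(\Omega)$, hence $\{u_n\}$ is bounded in $L^{(\delta+1)N/(N-2)}(\Omega)\subset L^2(\Omega)$. Finally, for $\omega\Subset\Omega$, Lemma \ref{LD11} gives $u_n\ge c_\omega>0$ in $\omega$, so on $\omega$, using $\delta>1$, $|\nabla u_n^{(\delta+1)/2}|^2=\tfrac{(\delta+1)^2}{4}u_n^{\delta-1}|\nabla u_n|^2\ge\tfrac{(\delta+1)^2}{4}c_\omega^{\delta-1}|\nabla u_n|^2$, whence $\int_\omega|\nabla u_n|^2\,dx\le C(\omega)$; together with the $L^2(\omega)$ bound this gives uniform boundedness in $W^{1,2}_{\mathrm{loc}}(\Omega)$.

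The main obstacle is the choice and admissibility of the test functions: since the $u_n$ are not a priori bounded, one must truncate at level $k$ --- and, when $0<\delta\le 1$, also add $\tfrac1n$ inside the power --- to obtain genuinely bounded $W^{1,2}_0$ functions for each fixed $n$, while still recovering after the chain rule the full gradient of a single power $(T_k(u_n)+\tfrac1n)^{(\delta+1)/2}$ or $(T_k(u_n))^{(\delta+1)/2}$, and only then passing $k\to\infty$ by Fatou. The second delicate point is the exponent bookkeeping that forces the $g_n$-term to be subordinate to $\|g\|_{L^{N(\delta+1)/(N+2\delta)}(\Omega)}$ times a power of $\|\nabla\psi_{n,k}\|_{L^2}$ strictly below $2$: this is exactly where the value $q=\tfrac{N(\delta+1)}{N+\delta-1}$ and the integrability hypothesis on $g$ originate. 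The non-negativity of the nonlocal contribution, by contrast, costs nothing, since every test function used is a monotone function of $u_n$.
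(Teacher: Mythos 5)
Your proof is correct and follows essentially the same route as the paper: testing the weak formulation with $T_l(u_n)$ for $\delta=1$, with $(T_l(u_n)+\epsilon)^{\delta}-\epsilon^{\delta}$ for $0<\delta<1$, and with $T_k^{\delta}(u_n)$ for $\delta>1$, discarding the non-negative nonlocal term, controlling the singular term by $\nu_n(\Omega)$ via Lemma \ref{Bdd}, and handling the $g_n$-term by the Sobolev--H\"older--Young scheme with exponent $N(\delta+1)/(N+2\delta)$. The only difference is cosmetic (you take $\epsilon=1/n$ where the paper takes $\epsilon<1/n$), and you supply in full the exponent bookkeeping and the passage from $\nabla\psi_n$ to $\nabla u_n$ that the paper delegates to \cite[Lemma A.3]{BG}.
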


\begin{proof}
\begin{enumerate}
\item[$(a)$] Choosing $v=T_l(u_n)$ as a test function in the weak formulation of (\ref{APE1}) and applying the properties (\ref{lkernel}), (\ref{nkernel}), we get
    \begin{align}\label{RR2}
        \alpha\int_\Omega |\nabla T_l(u_n)|^2\, dx+&\Lambda^{-1}\underbrace{\int_{\mathbb{R}^N}\int_{\mathbb{R}^N}\frac{(u_n(x)-u_n(y))(T_l(u_n)(x)-T_l(u_n)(y))}{|x-y|^{N+2s}}\, dxdy}_{\geq 0}\nonumber\\
        &\leq\int_\Omega\frac{T_l(u_n)}{(u_n+\frac{1}{n})}\, d\nu_n+\int_\Omega g_nT_l(u_n)\, dx.
    \end{align}
    Using H\"{o}lder's inequality, $\nu_n$ is bounded in $\mathcal{M}(\Omega)$ along with Lemma \ref{emb} and Lemma \ref{Bdd} in (\ref{RR2}), it follows that 
\begin{align}
    \alpha||T_l(u_n)||_{{W_0^{1,2}(\Omega)}}^2\leq \nu_n(\Omega)+\int gT_l(u_n)\, dx&\leq C+||g||_{L^{(2^*)'}(\Omega)}||T_l(u_n)||_{L^{2^*}(\Omega)}\nonumber\\
    &\leq C(1+||T_l(u_n)||_{{W_0^{1,2}(\Omega)}}),
\end{align}
where $C$ is a constant independent of $n,\,l$. Hence, $||T_l(u_n)||_{{W_0^{1,2}(\Omega)}}^2\leq C,$
where $C$ is independent of $n,l$. Taking $l\to\infty$ and using Fatou's lemma, we conclude that the sequence $\{u_n\}_{n\in\mathbb{N}}$ is uniformly bounded in $W^{1,2}_0(\Omega).$

\item[$(b)$] If $0<\delta<1$, then for $0<\epsilon<\frac{1}{n}$, choosing $\phi=(T_l(u_n)+\epsilon)^\delta-\epsilon^\delta$ as a test function in the weak formulation of (\ref{APE1}) and using the properties (\ref{lkernel}), (\ref{nkernel}), we obtain
\begin{align}\label{RR1}
     \alpha\int_\Omega\Big|\nabla (T_l(u_n)+\epsilon)^{\frac{\delta+1}{2}}\Big|^2\, dx+&\Lambda^{-1}\underbrace{\int_{\mathbb{R}^N}\int_{\mathbb{R}^N}\frac{(u_n(x)-u_n(y))(\phi(x)-\phi(y))}{|x-y|^{N+2s}}\, dxdy}_{\geq 0}\nonumber\\
     &\leq \int_\Omega\frac{\phi}{(u_n+\frac{1}{n})^\delta}\, d\nu_n+\int_\Omega g_n{\phi}\, dx\nonumber\\
     &\leq \int_\Omega \frac{(T_l(u_n)+\epsilon)^\delta}{(T_l(u_n)+\frac{1}{n})^\delta}\, d\nu_n+\int_\Omega g(T_l(u_n)+\epsilon)^\delta\, dx.
\end{align}
Using H\"{o}lder inequality, $\nu_n$ is bounded in $\mathcal{M}(\Omega)$ and Lemma \ref{Bdd}, (\ref{RR1}) yields
\begin{align}\label{BW}
    \int_\Omega\Big|\nabla (T_l(u_n)+\epsilon)^{\frac{\delta+1}{2}}\Big|^2\, dx&\leq \left [|\nu_n(\Omega)|+\left (\int_\Omega (T_l(u_n)+\epsilon)^\frac{2^*(\delta+1)}{2}\, dx\right )^\frac{2\delta}{2^*(\delta+1)}||g||_{L^r(\Omega)}\right ]\nonumber\\
    &\leq C\Big[1+\left (\int_\Omega (T_l(u_n)+\epsilon)^\frac{2^*(\delta+1)}{2}\, dx\right )^\frac{2\delta}{2^*(\delta+1)}\Big],
\end{align}
where $r=\frac{N(\delta+1)}{N+2\delta}$ and $2^*=\frac{2N}{N-2}$. Arguing as in \cite[Lemma A.3-(b)]{BG}, we derive
$$\int_\Omega|\nabla T_l(u_n)|^q\, dx\leq C,$$
where $q=\frac{N(\delta+1)}{N+\delta-1}$ and $C>0$ is independent of $n,\;l$. Taking $l\to\infty$ and applying Fatou's lemma, we obtain the result.

\item[$(c)$] Let $k>0$, then we choose $T_k^\delta(u_n)$ as a test function in the weak formulation of the equation (\ref{BP}) with $\delta>1$ and apply (\ref{lkernel}), (\ref{nkernel}) to obtain
\begin{align*}
\alpha\delta\int_\Omega\Big|\nabla T_k^\frac{\delta+1}{2}(u_n)\Big|^2\, dx+&\Lambda^{-1}\underbrace{\int_{\mathbb{R}^N}\int_{\mathbb{R}^N}\frac{(u_n(x)-u_n(y))(T_k^\delta(u_n(x))-T_k^\delta(u_n(y)))}{|x-y|^{N+2s}}\, dxdy}_{\geq 0}\\
    &\leq \int_\Omega \frac{T_k^\delta(u_n)}{(u_n+\frac{1}{n})^\delta}\, d\nu_n\nonumber+\int_\Omega g_nT_k^\delta(u_n)\, dx.
\end{align*}
Using the fact $T_k(s)\leq s$ for every $k,s>0$ and Lemma \ref{Bdd}, the above identity yields
\begin{align}
    \int_\Omega\Big|\nabla T_k^\frac{\delta+1}{2}(u_n)\Big|^2\, dx\leq \nu_n(\Omega)+\int_\Omega gT_k^\delta(u_n)\, dx\leq C+\int_\Omega gT_k^\delta(u_n)\, dx,
\end{align}
 where $C$ is an upper bound of the sequence $\{|\nu_n(\Omega)|\}$. The remainder of the proof proceeds similarly to the argument used in \cite[Lemma A.3, page 29]{BG}.
\end{enumerate}
\end{proof}
\section*{Acknowledgement} P. Garain thanks IISER Berhampur for the Seed grant: IISERBPR/RD/OO/2024/15, Date: February 08, 2024.

\end{document}